\newtheorem{lemma}{Lemma}[section]
\newtheorem{prop}{Proposition}[section]
\newtheorem{defi}{D\'efinition}[section]
\newtheorem{theorem}{Theorem}[section]
\newtheorem{definition}{Definition}[section]
\newtheorem{coro}{Corollary}[section]
\theoremstyle{definition}
\newtheorem{rem}{\textit{Remark}}[section]
\newtheorem{ex}{Example}[section]
\def\P{\mathcal{P}}
\def\A{\mathcal{A}}
\def\F{\mathbb{F}}
\def\N{\mathbb{N}}
\def\K{\mathbb{K}}
\def\Q{\mathbb{Q}}
\def\a{{\bf a}}
\def\b{{\bf b}}
\def\t{{\bf t}}
\begin{document}

\title[] {Rational approximations to algebraic Laurent series with coefficients in a finite field}


\author{Alina Firicel}

\address{Universit\'{e} de Lyon\\
Universit\'{e} Lyon 1 \\
 Institut Camille Jordan \\
  UMR 5208 du CNRS \\
   43, boulevard du 11 novembre 1918 \\ 
   F-69622 Villeurbanne Cedex, France}
   

\maketitle 


\begin{abstract} 
In this paper we give a general upper bound for the irrationality exponent of algebraic Laurent series with coefficients in a finite
field. Our proof  is based on a method introduced in a different
framework by Adamczewski and Cassaigne. It makes use of automata theory
and, in our context, of a classical theorem due to Christol. We then
introduce a new approach which allows us to strongly improve this
general bound in many cases. As an illustration, we give few examples of
algebraic Laurent series for which we are able to compute the exact
value of the irrationality exponent.
\end{abstract}

\begin{section}{Introduction}

One of the basic question in Diophantine approximation is how well real numbers can be approximated by rationals. The theory of rational approximation of real numbers has then been transposed to function fields after the pioneering works of Maillet \cite{Maillet} in 1906 and Gill \cite{Gill} in 1930.
 In the present work, we are interested in the way algebraic Laurent series with coefficients in a finite field  can be approximated by rational functions.

Given a field $\mathbb K$, we let $\mathbb K(T)$, $\mathbb K[[T^{-1}]]$ and $\mathbb K((T^{-1}))$ denote, respectively, 
the field of rational functions, the ring of formal series and the field of Laurent series over the field 
$\mathbb K$. We also consider the absolute value defined on $\mathbb K(T)$ by $$\vert P/Q \vert=\vert T\vert^{\deg P- \deg Q},$$ for $(P, Q) \in \mathbb K[T]^2$, where $\vert T\vert$ is a fixed real number larger than $1$. The field of Laurent series in $1/T$, usually denoted 
by $\mathbb K((T^{-1}))$, 
should be seen as a completion of the field $\mathbb K(T)$ for this absolute value. 
Thus, if $f$ is a nonzero element of $\mathbb K((T^{-1}))$ defined by
$$f(T)= a_{i_0}T^{-i_0}+a_{i_0+1}T^{-i_0-1}+\cdots, $$ where $ i_0\in \mathbb Z, a_i\in \mathbb K, 
a_{i_0}\neq 0, $ we have $|f|=\vert T\vert^{-i_0}$.
We also say that a Laurent series is algebraic if it is algebraic over the field of rational functions $\K(T)$. The degree of an algebraic Laurent series  $f$ in $\mathbb K((1/T))$ is defined as $[\mathbb K(T)(f):\mathbb K(T)]$, the degree of the field extension generated by $f$.

We recall that the irrationality exponent (or measure) of a given Laurent series $f$, denoted by $\mu(f)$, 
is the supremum of the real numbers $\tau$ for which the inequality
$$ \left| f-\frac{P}{Q}\right | < \frac{1}{\vert Q \vert ^\tau}$$
has infinitely many solutions $(P, Q) \in \K[T]^2$, $Q\neq 0$.
Thus, $\mu(f)$ measures the quality of the best rational approximations to $f$.

\medskip

Throughout this paper, $p$ denotes a prime number and $q$ is a power of $p$. Our aim is to study the irrationality exponent of  algebraic Laurent series in the field $\F_q((1/T))$.

 In 1949, Mahler \cite{mahler} observed that the analogue of the fundamental Liouville's inequality holds true
for algebraic Laurent series over a field of positive characteristic.

\begin{theorem}(Mahler, 1949) Let $\K$ be a field of positive characteristic and $f \in \K((T^{-1}))$ 
be an algebraic Laurent series over $\K(T)$ of degree $d>1$. Then, there exists a positive real number $C$ 
such that $$ \left| f-\frac{P}{Q}\right |\geq \frac{C}{\vert Q \vert ^d}, $$
for all $(P, Q)\in \K[T]^2$, with $Q\neq 0$.
 \end{theorem}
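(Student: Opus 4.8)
The plan is to transpose the classical proof of Liouville's inequality to the non-archimedean setting, replacing the mean value theorem by a purely algebraic factorization that survives in positive characteristic. First I would fix a minimal polynomial of $f$ over $\K(T)$ and clear denominators, obtaining $A(X)=\sum_{i=0}^{d}a_i X^i\in\K[T][X]$ with all $a_i\in\K[T]$, $a_d\neq 0$, which is irreducible of degree $d>1$ in $X$ and satisfies $A(f)=0$. Before estimating, I would dispose of the easy range: since $Q\in\K[T]$ is nonzero we have $|Q|=|T|^{\deg Q}\geq 1$, so if $|f-P/Q|\geq 1$ the claimed bound holds for any $C\leq 1$. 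Hence one may assume throughout that $|f-P/Q|<1$.

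The core of the argument is to sandwich the quantity $|A(P/Q)|$ between a lower and an upper bound. For the lower bound, I would note that because $A$ is irreducible of degree $d>1$, no element of $\K(T)$ is a root of $A$; in particular $A(P/Q)\neq 0$. Then $Q^d A(P/Q)=\sum_{i=0}^{d} a_i P^i Q^{d-i}$ lies in $\K[T]$ and is nonzero, so its absolute value is at least $|T|^0=1$ (a nonzero polynomial has non-negative degree). This yields $|A(P/Q)|\geq |Q|^{-d}$, and this is exactly where the arithmetic of the ground field enters.

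For the upper bound, I would exploit $A(f)=0$ together with the formal identity $X^i-f^i=(X-f)\sum_{k=0}^{i-1}X^k f^{\,i-1-k}$ to write
$$A(P/Q)=A(P/Q)-A(f)=\left(\frac{P}{Q}-f\right)\sum_{i=1}^{d}a_i\sum_{k=0}^{i-1}\left(\frac{P}{Q}\right)^k f^{\,i-1-k}.$$
Since $|f-P/Q|<1$, the ultrametric inequality gives $|P/Q|\leq\max(|f|,1)=:M_0$, so each term of the double sum is bounded by $\max_i|a_i|\cdot M_0^{\,i-1}$. Thus there is a constant $M>0$ depending only on $A$ and $|f|$ (not on $P,Q$) with $|A(P/Q)|\leq M\,|f-P/Q|$. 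Combining the two bounds gives $|Q|^{-d}\leq M\,|f-P/Q|$, i.e. $|f-P/Q|\geq (1/M)\,|Q|^{-d}$, and taking $C=\min(1,1/M)$ settles both ranges.

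The one place where positive characteristic might be expected to cause trouble is the linearization step: the classical argument frequently invokes separability, $A'(f)\neq 0$, to control $A$ near its root $f$, and an irreducible polynomial need not be separable in characteristic $p$. The identity $X^i-f^i=(X-f)(\cdots)$ avoids the derivative altogether, so the only genuinely arithmetic ingredient is that a nonzero polynomial over $\K$ has absolute value at least $1$. I expect no real obstacle beyond organizing these estimates; the content of the statement is precisely that this inseparability issue evaporates, making the characteristic-$p$ bound as clean as its characteristic-$0$ analogue.
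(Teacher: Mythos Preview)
The paper does not actually prove this theorem: it is stated in the introduction as a classical result of Mahler (with a citation to \cite{mahler}) and used only as background, so there is no ``paper's own proof'' to compare against.

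That said, your argument is correct and is essentially the standard Liouville argument in the non-archimedean setting. The two key points are handled properly: (i) irreducibility of $A$ in $X$ with $d>1$ forces $A(P/Q)\neq 0$, whence $Q^d A(P/Q)$ is a nonzero element of $\K[T]$ and therefore has absolute value at least $1$; and (ii) the algebraic factorization $X^i-f^i=(X-f)\sum_{k}X^k f^{\,i-1-k}$ replaces any appeal to $A'(f)$, so inseparability in positive characteristic is irrelevant. Your bookkeeping with $M_0=\max(|f|,1)$ and the ultrametric inequality is also fine, and the final choice $C=\min(1,1/M)$ correctly covers the trivial range $|f-P/Q|\geq 1$.
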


In other words, Mahler's theorem tells us that the irrationality exponent of an algebraic irrational 
Laurent series is at most equal to its degree. 

In the case of real numbers, Liouville's theorem was superseded by the works of Thue \cite{Thue09}, Siegel  \cite{Siegel}, 
Dyson \cite{Dyson} and others, leading to the famous Roth's theorem \cite{Roth55}, which tells that the irrationality exponent of an algebraic real number is equal to $2$. In 1960, Uchiyama obtained an analogue of Roth's theorem \cite{Uchiyama}, for the case of Laurent series with coefficients in a field of characteristic $0$. 

When the base field has positive characteristic, it is well-known that there is no direct analogue of Roth's theorem. In fact, the Liouville--Mahler theorem turns out to be optimal. In order to see this, it is sufficient to consider the element $ f_q \in \F_q((T^{-1}))$ defined by
 $f_q(T)=\sum_{i\geq 0}{T^{-q^i}}$. It is not difficult to see that $f_q$ is an algebraic Laurent series of degree 
 $q$ (since it verifies the equation $f^q-f+T^{-1}=0$) while the irrationality exponent of $f$ is equal to 
 $q$. Note that these examples are sometimes referred to as Mahler's algebraic Laurent series. 
In the same direction, Osgood \cite{osgood1} 
and Baum and Sweet \cite{baumsweet} gave examples of algebraic Laurent series of various degrees for which Liouville's bound is the best possible.
For a special class of algebraic Laurent series, the bound given by Liouville for the irrational exponent 
was improved by Osgood \cite{osgood1, osgood2}. In 1976, this author proved an analog of 
Thue's theorem for algebraic Laurent series which are not solutions of a rational Ricatti differential equation. 
In 1996, de Mathan and Lasjaunias \cite{lasjaunias_demathan} proved that Thue's theorem actually 
holds for every algebraic Laurent series in $\K((T^{-1}))$, $\K$ being an arbitrary field of characteristic $p$, 
which satisfies no equation of the form $f=(Af^{p^s}+B)/(Cf^{p^s}+D)$, where $A, B, C, D \in \K[T]$, not all zero, and $s\in \N^*$. Laurent series satisfying such an equation are called hyperquadratic and they were studied by many authors \cite{Lasjaunias, Schmidt2, Thakur2, Voloch88}. Note that every hyperquadratic Laurent series 
does also satisfy a Ricatti differential equation.

\bigskip

Except the results obtained by Mahler, Lasjaunias-DeMathan or Osgood,  we do not know other general method in order to bound up the irrationality exponent of algebraic Laurent series over $\F_q(T)$. It is worth mentionning that the situation for function fields totally differs from the one of real numbers. For instance Schmidt \cite{Schmidt2} and Thakur \cite{Thakur2} independently proved that the set of irrationality exponents of algebraic Laurent series contains all rational real numbers greater than or equal to $2$.  

The aim of this paper is to introduce a new approach in order to bound up the irrationality exponent of an algebraic Laurent series, which is based on the use of the Laurent series expansion. As a starting point, we use a  theorem of Christol \cite{christol_2} which characterize in terms of automata the algebraic Laurent series with coefficient in a finite field. 
More precisely, we recall that $f(T)=\sum_{i\geq 0}{a_iT^{-i}} \in \F_q[[T^{-1}]]$ is algebraic 
if and only if
the sequence $(a_i)_{i\geq 0}$ is generated by a $p$-automaton. Furthermore, we recall that 
by a classical result of Eilenberg \cite{eilenberg} the so-called $p$-kernel of a $p$-automatic sequence 
is always finite (see Section \ref{Maximal repetitions in automatic sequences}). 

Our main result is the following explicit general upper bound for the irrationality exponent of 
algebraic Laurent series in $\F_p((1/T))$.

\begin{theorem}\label{MUIN11}
Let $f(T)=\sum_{i\geq -k}{a_iT^{-i}}$ be an algebraic Laurent series with coefficients in a finite field of 
characteristic $p$. Let $s$ be the cardinality of the $p$-kernel of 
$\a=(a_i)_{i\geq 0}$ and $e$ be the number of states of the minimal automaton generating $\a$ (in \textit{direct reading}). 
Then the irrationality exponent $\mu(f)$ satisfies 
\begin{equation}
 \mu(f) \leq p^{s+1}e.
\end{equation}
\end{theorem}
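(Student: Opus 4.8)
The plan is to convert the Diophantine problem into a combinatorial one about the coefficient word $\mathbf a=(a_i)_{i\ge 0}$, following the strategy of Adamczewski and Cassaigne. First I would record the elementary dictionary between rational approximations and \emph{near-periodic prefixes} of $\mathbf a$. On the one hand, if $a_i=a_{i+\pi}$ holds for $n\le i<n+L$, then the Laurent series whose coefficient sequence coincides with $(a_i)$ on $[0,n+L)$ and is purely $\pi$-periodic beyond position $n$ is a rational function $P/Q$ with $\deg Q\le n+\pi$, and it satisfies $|f-P/Q|\le |T|^{-(n+L)}$; hence it realises the approximation exponent $(n+L)/(n+\pi)$. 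On the other hand---and this is the direction needed for an upper bound---any approximation with $|f-P/Q|<|Q|^{-\tau}$ forces $\mathbf a$ to agree, up to order $\tau\deg Q$, with the eventually periodic expansion of $P/Q$, whose period and pre-period are at most $\deg Q$; this produces a repetition of $\mathbf a$ with exponent comparable to $\tau$. Passing to the best approximations (the convergents of the continued fraction of $f$, whose expansions have controlled period) removes the spurious constant factors and yields
\[
\mu(f)\ \le\ \sup\ \frac{n+L}{n+\pi},
\]
the supremum being taken over all repetitions $a_i=a_{i+\pi}$, $n\le i<n+L$, of $\mathbf a$.

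It then remains to bound this repetition exponent, which is the heart of the matter and the only place where the automaton enters. Fix such a repetition and write $\pi=p^{v}\pi'$ with $\gcd(\pi',p)=1$. I would first descend along the $p$-kernel: the subsequence $(a_{p^{v}i+r})_i$ obtained by fixing the $v$ lowest base-$p$ digits of the index belongs to the $p$-kernel, and the periodicity $a_i=a_{i+\pi}$ descends to a periodicity of period $\pi'$ for this subsequence on a proportionally rescaled interval. Since the descent can meet at most $s$ genuinely distinct kernel sequences, and the period contributes a bounded power $p^{v}$, their interplay is what is responsible for the factor $p^{s+1}$. Once the period is coprime to $p$, I would run a pumping argument on the minimal direct-reading automaton: reading the base-$p$ expansions of the indices in the periodic window, a window longer than $e$ times the period must revisit one of the $e$ states in a way that can be iterated to propagate the period to the whole tail of $\mathbf a$, i.e.\ force $f$ to be rational---excluded since $\deg f>1$. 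Quantifying this contradiction bounds $L$ by a multiple of $e\,(n+\pi)$ and, after reinstating the $p^{s+1}$ coming from the descent, gives $(n+L)/(n+\pi)\le p^{s+1}e$.

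The main obstacle is the repetition bound of the second paragraph, and within it the interaction between the coding (the output map of the automaton) and the periodicity. A repetition of $\mathbf a$ need not lift to a repetition of the underlying state sequence, because distinct states may carry the same output, so one cannot simply work with the pure fixed point of the uniform morphism supplied by Cobham's theorem: the coding can only create repetitions, not destroy them. This is precisely why both the kernel size $s$ (controlling the period after removing powers of $p$) and the number $e$ of states in direct reading (controlling how far a fixed period can propagate before the automaton repeats a state) are needed, and why their combination appears as $p^{s+1}e$ rather than a single clean constant. The secondary technical points I expect to handle are the carries occurring in the shift $i\mapsto i+\pi$ when $\pi$ is not a power of $p$, the treatment of the pre-period in the first reduction so that no constant factor is lost, and the verification that the pumped sequence is genuinely eventually periodic, so that the hypothesis $d>1$ can be invoked to reach the contradiction.
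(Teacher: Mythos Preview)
There is a genuine gap in your first reduction. The claim that the Laurent expansion of a rational $P/Q\in\mathbb{F}_q(T)$ has period and pre-period at most $\deg Q$ is false: the pre-period is indeed bounded by the $T$-adic valuation of $Q$, but the period is the multiplicative order of $T$ modulo the $T$-coprime part of $Q$, which for irreducible $Q$ of degree $d$ can be as large as $q^d-1$ (over $\mathbb{F}_2$, already $1/(T^2+T+1)$ has period $3>2$). So an approximation $|f-P/Q|<|Q|^{-\tau}$ only forces $\mathbf{a}$ to satisfy a \emph{linear recurrence} of length $\deg Q$ on a window of length about $\tau\deg Q$, not a simple periodicity with $n+\pi\le\deg Q$; your dictionary therefore does not deliver a repetition of exponent comparable to $\tau$, and passing to convergents does not help since their denominators have no special shape. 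The inequality $\mu(f)\le\sup(n+L)/(n+\pi)$ is thus unproved.

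The paper avoids this by working in the opposite direction: it never analyses an arbitrary approximation, but \emph{constructs} one explicit sequence $(P_n/Q_n)$ and feeds it into the elementary approximation lemma (two-sided bounds $c_1|Q_n|^{-1-\rho}\le|f-P_n/Q_n|\le c_2|Q_n|^{-1-\delta}$ together with $|Q_{n+1}|\le c_0|Q_n|^{\theta}$ give $\mu(f)\le\theta(1+\rho)/\delta$). The construction is Cobham plus pigeonhole: in the internal fixed point $\sigma^{\infty}(a)$ some letter repeats within the first $e+1$ symbols, yielding a prefix $UV^{1+1/\ell}$ with $|U|+|V|=k+\ell\le e$; applying $\sigma^n$ and the coding gives prefixes $U_nV_n^{1+1/\ell}$ of $\mathbf{a}$ and hence rationals with $Q_n=T^{kp^n-1}(T^{\ell p^n}-1)$, so $\theta=p$ and $\delta=1/(k+\ell)\ge 1/e$. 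The kernel enters only through the Adamczewski--Cassaigne repetition bound $|UV^{\omega}|/|UV|<p^s$, used solely to supply the \emph{lower} bound $|f-P_n/Q_n|\ge|Q_n|^{-p^s}$, i.e.\ $\rho=p^s-1$. Plugging in gives $\mu(f)\le p\cdot p^s\cdot(k+\ell)\le p^{s+1}e$. In particular the factor $e$ is not produced by any pumping argument on the automaton; it merely records where the first repeated letter sits, and the repetition exponent itself is already bounded by $p^s$, so the weaker target $p^{s+1}e$ you set for your second step is an artefact of the flawed first step rather than the true combinatorial cost.
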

%



The approach we use to prove Theorem \ref{MUIN11} 
already appears in a different framework in \cite{adamczewski_bugeaudR, adamczewski_cassaigne, adamczewski_rivoal}. 
It is essentially based on repetitive patterns occurring in automatic sequences. More precisely, 
each algebraic formal series $f(T)=\sum_{i\geq 0}{a_iT^{-i}}$ is identified with a $p$-automatic sequence 
$\a:=(a_i)_{i\geq 0}$ over $\F_q$. Then we use a theorem of Cobham which characterizes $p$-automatic 
sequences in terms of $p$-uniform morphisms of free monoids (see Section \ref{3.Morphisms and Cobham's theorem}). As a consequence of this result 
and of the pigeonhole principle, we are able to find 
infinitely many pairs of finite words $(U_n, V_n)$ and a number $\omega >1$ such that $U_nV_n^{\omega}$ is prefix of $\a$ for every positive integer $n$.  Hence, there exists an infinite sequence of pairs 
of polynomials $(P_n, Q_n)$ such that the Laurent series expansion of the rational function $P_n/Q_n$ 
is the (ultimately periodic) sequence $\textbf{c}_n:=U_nV_n^{\infty}$. 
The sequence of rational functions $P_n/Q_n$ provides good rational approximations to $f$ since 
the words $\a$ and $\textbf{c}_n$ have the common prefix $U_nV_n^{\omega}$. Furthermore, the length of $U_n$ 
and $V_n$ are respectively of the form $kp^n$ and $\ell p^n$. Using such approximations 
we are able to prove the following result (see Theorem \ref{muin112}): 

\begin{equation}
 \frac{k+\omega\ell}{k+\ell}\leq \mu(f) \leq \frac{p^{s+1}(k+\ell)}{(\omega-1)\ell}.
 \end{equation}

In practice, it may happen that we can choose $U_n$ and $V_n$ such that $\a$ and $\textbf{c}_n$ 
have the same first $(k+\omega \ell)p^n$ digits, while the $(k+\omega \ell)p^n+1$th are different. 
In this case, the upper bound we obtain for the irrationality exponent may be a much better one and 
in particular does not depend anymore on the cardinality of the $p$-kernel. 
Furthermore, in the case where we can prove that $(P_n, Q_n)=1$ for all $n$ large enough, we obtain 
a significative improvement on the upper bound, as it will be explained in Remark \ref{3.sameprefix}, that sometimes leads to the exact value $\mu(f)$. 
Note that, when working with similar constructions involving real numbers, it is well-known that 
this coprimality assumption is usually difficult to check (see \cite{adamczewski_rivoal}). \\

In the second part of this paper,
we introduce a new approach in order to overcome this difficulty. We provide an algorithm 
that allows us to check, in a finite amount of time, wether the polynomials $P_n$ 
and $Q_n$, associated with an algebraic Laurent series $f$, are relatively prime 
for all $n$ large enough. In order to do this, we observed that the rational approximations 
we obtained have a very specific form: the roots of $Q_n$ can only be $0$ or the $\ell$th roots 
of unity (see Section \ref{3.Equivalentcondition}). 
Then we have to develop a calculus allowing to compute the polynomials $P_n(T)$. 
 In order to do this, we introduce some matrices associated with $p$-morphisms. These matrices generalize 
 the so--called incidence matrix of the underlying morphism (see Section \ref{Matrix associated with morphisms}) 
 and their study could also be of independent interest.\\

In the last part of this paper, we illustrate the relevance of our approach with few examples. 
We give in particular several algebraic Laurent series for which we are able to compute the exact value 
of the irrationality exponent.


\end{section}

\begin{section}{Terminology and basic notions}

A \emph{word} is a finite, as well as infinite, sequence of symbols (or letters) belonging to a nonempty set $\mathcal A$, called \emph{alphabet}. We usually denote words by juxtaposition of their symbols. 

Given an alphabet $\mathcal A$, we let $\mathcal{ A}^*:= \cup_{m=0}^{\infty} \mathcal {A}^m$ denote the set of finite words over $\mathcal A$. Let $V:=a_0a_1\cdots a_{m-1} \in \mathcal{ A}^*$; then, the integer $m$ is the length of $V$ and is denoted by $\left|V\right|$. The word of length $0$ is the empty word, usually denoted by $\varepsilon$. We also let $\mathcal{A}^m$ denote the set of all finite words of length $m$ and by $\mathcal{A}^{\N}$ the set of all infinite words over $\mathcal A$. We typically use the uppercase italic letters $ U, V, W$ to represent elements of $\mathcal{ A}^*$. We also use bold lowercase letters $\a, \bf b, \bf c$ to represent infinite words.

%
%


For any nonnegative integer $n$, we denote by $U^{n}:=UU\cdots U$ ($n$ times concatenation of the word $U$). More generally, for any positive real number $\omega$ we let $U^{\omega}$ denote the word $U^{\lfloor \omega \rfloor}U'$, where $U'$ is the prefix of $U$ of length $\lceil(\omega-\lfloor \omega \rfloor)\vert U \vert \rceil$. Here,  $ \lfloor \zeta \rfloor$ and $\lceil \zeta \rceil$ denote, respectively, the integer part and the upper integer part of the real number $\zeta$. We also denote $U^{\infty}:=UU\cdots $, that is $U$ concatenated (with itself) infinitely many times.

An infinite word $\a$ is \emph{periodic} if there exists a finite word $V$ such that $\a=V^{\infty}$. An infinite word is \emph{ultimately periodic} if there exist two finite words $U$ and $V$ such that $\a=UV^{\infty}$. 

Throughout this paper, we set $\A_m:=\{0, 1, \ldots, m-1\}$, for $m\geq 1$, which will serve as a generic alphabet.

\begin{subsection}{Automatic sequences and Christol's theorem}\label{Automatic sequences and Christol's theorem}

Let $k$ be a positive integer, $k\geq 2$. An infinite sequence $\a=(a_i)_{i\geq 0}$ is $k$-automatic if, roughly speaking, there exists a finite automaton which produces the term $a_i$ as output, when the input is the $k$-ary expansion of $i$.

For a formal definition of an automatic sequence, let us define a \textit{$k$-deterministic finite automaton with output} or, shortly, $k$-DFAO. This is a $5$-tuple
$$M=(Q,  \delta, q_0, \Delta, \varphi)$$
where $Q$ is a finite set of states, $ \delta: Q\times \A_k \mapsto Q$ is the transition function, $q_0$ is the initial state, $\Delta$ is the output alphabet and $\varphi: Q \mapsto \Delta$ is the output function.
For a finite word $W=w_r w_{r-1} \cdots w_0 \in \A_k^r$, we let $[W]_k$ denote the number $\sum_{i=0}^{r}{w_i k ^i}$.

We now say that a sequence $\a=(a_i)_{i\geq 0}$ is \textit{$k$-automatic} if there exists a $k$-DFAO such that $a_i=\varphi(\delta(q_0, w))$ for all $i\geq 0$ and all words $W$ with $[W]_k=i$.

\medskip

A classical example of automatic sequence is the so-called Thue-Morse sequence: $\t=(t_i)_{i\geq 0}=01101001100 \cdots$, which counts the number of $1$'s (mod $2$) in the base-$2$ representation of $i$. It is generated by the following automaton. More references on automatic sequences can also be found in the monograph \cite{Allouche_Shallit}.

\begin{figure}[h]
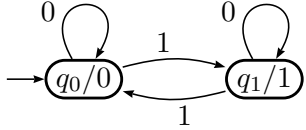

\centering
\VCDraw{%
  \begin{VCPicture}{(0, -0.5)(4, 1.5)}
\StateVar[q_0/0]{(0, 0)}{a} \StateVar[q_1/1]{(4, 0)}{b}
\Initial[w]{a}
\LoopN{a}{0}
\LoopN{b}{0}
\ArcL{a}{b}{1}
\ArcL{b}{a}{1}
\end{VCPicture}}
\caption{Automaton generating the Thue-Morse sequence}
\label{AB:figure:thue}
\end{figure}


If we now consider the Laurent series $f_{\t}(T)=\sum_{i\geq 0}{t_i T^{-i}}$ as an element of $\F_2((T^{-1}))$, one can check that $f_{\t}$ satisfies the algebraic equation:
$$(T+1)^3 f_{\t}^2(T)+T	(T+1) f_{\t}(T)+1=0.$$
Hence, $f_{\t}$ is an algebraic Laurent series over $\F_2(T)$ whose sequence of coefficients is a $2$-automatic sequence. Actually, this is not an isolated case. Indeed, there is a famous theorem of Christol \cite{christol_2} which precisely describes the algebraic Laurent series over $\F_q(T)$ as follows.

\begin{theorem} [Christol, 1979] Let $f_{\a}(T)=\sum_{i\geq -i_0}{a_iT^{-i}}$ be a Laurent series with coefficients in a finite field of characteristic $p$. Then $f_{\a}$ is algebraic over $\F_q(T)$ if and only if the sequence $\a=(a_i)_{i\geq 0}$ is $p$-automatic.
\end{theorem}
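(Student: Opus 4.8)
The plan is to prove both implications by translating algebraicity over $\F_q(T)$ into a finiteness statement about the kernel of $\a$, and then invoking Eilenberg's theorem (a sequence is $p$-automatic if and only if its $p$-kernel is finite). First I would reduce to a power series: subtracting the polynomial $\sum_{-i_0\le i<0}a_iT^{-i}\in\F_q(T)$ changes neither the algebraicity of $f_{\a}$ nor the automaticity of $(a_i)_{i\ge 0}$, so I may assume $f=\sum_{i\ge 0}a_ix^{i}\in\F_q[[x]]$ with $x:=1/T$. Since $q=p^{m}$, a sequence is $p$-automatic if and only if it is $q$-automatic (Cobham's theorem on powers of the base), so it is convenient to work with the $q$-section operators $\Lambda_r$, $0\le r<q$, defined on $\F_q((x))$ by $\Lambda_r(\sum_n b_nx^{n})=\sum_n b_{qn+r}x^{n}$. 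These are $\F_q$-linear, their iterates applied to $f$ produce exactly the power series attached to the elements of the $q$-kernel $K_q(\a)$, and, because $\F_q$ is perfect, they satisfy the two identities $g=\sum_{r=0}^{q-1}x^{r}(\Lambda_r g)^{q}$ and $\Lambda_r(h^{q}g)=h\,\Lambda_r(g)$ for all $g,h\in\F_q((x))$.

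For the implication automatic $\Rightarrow$ algebraic I would argue as follows. If $K_q(\a)$ is finite, let $g_1,\dots,g_N$ be the finitely many power series it contains and set $V:=\mathrm{span}_{\F_q(x)}\{g_1,\dots,g_N\}\subseteq\F_q((x))$, a finite-dimensional $\F_q(x)$-vector space, say of dimension $e$. Every section $\Lambda_r g_t$ again belongs to $K_q(\a)$, so the identity $g_t=\sum_{r}x^{r}(\Lambda_r g_t)^{q}$ shows $V\subseteq V^{(q)}:=\mathrm{span}_{\F_q(x)}\{g_s^{q}\}$. Since $\phi(V)$ has $\F_q(x^{q})$-dimension $e$ and spans $V^{(q)}$ over $\F_q(x)$, we get $\dim_{\F_q(x)}V^{(q)}\le e$, whence $V=V^{(q)}$ and $V$ is stable under the Frobenius $\phi(h)=h^{q}$. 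As $f\in V$, the $e+1$ elements $f,f^{q},\dots,f^{q^{e}}$ are $\F_q(x)$-linearly dependent, which is precisely a nontrivial algebraic relation for $f$ over $\F_q(T)$.

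For the converse, algebraic $\Rightarrow$ automatic, I would first record that an algebraic $f$ satisfies a Frobenius-linear equation $\sum_{i=0}^{m}A_i(x)f^{q^{i}}=0$ with $A_i\in\F_q[x]$ and $A_0\neq 0$: the powers $f,f^{q},\dots,f^{q^{n}}$ all lie in the degree-$n$ extension $\F_q(x)(f)$ and so are $\F_q(x)$-linearly dependent, and one clears denominators and uses that $f$ is a genuine power series (integer exponents) to normalise so that $A_0\neq 0$. Using this equation to express $f$ through the $f^{q^{i}}$ with $i\ge 1$, and exploiting the semilinearity $\Lambda_r(h^{q}g)=h\,\Lambda_r(g)$ together with the fact that $\Lambda_r$ divides polynomial degrees by $q$, I would construct an explicit $\F_q$-vector space $M=\{\sum_{i<m}(P_i/A_0)\,f^{q^{i}}:\deg P_i\le C\}$, for a suitable constant $C$, that contains $f$ and is stable under every $\Lambda_r$. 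Because $M$ is finite-dimensional over the finite field $\F_q$ it is a finite set, so the orbit of $f$ under the section monoid, hence $K_q(\a)$, is finite; Eilenberg's theorem then yields $q$-automaticity and Cobham's theorem $p$-automaticity.

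I expect the main obstacle to be this last construction. Pinning down the normalisation $A_0\neq 0$ and, above all, the degree bound $C$ for which $M$ is genuinely closed under all the operators $\Lambda_r$ requires careful degree estimates, obtained by writing the rational functions $P_i/A_0$ in the form $\sum_r x^{r}(\cdot)^{q}$ and tracking how the fixed denominator $A_0$ and the coefficients $A_i$ interact under sectioning. By contrast, the automatic $\Rightarrow$ algebraic direction is essentially formal once the Frobenius-stability of $V$ has been established.
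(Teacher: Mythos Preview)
The paper does not prove this theorem: it is quoted as a known result of Christol, with a reference to \cite{christol_2}, and is used as a black box throughout. There is therefore no ``paper's own proof'' to compare against.

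That said, your sketch is the classical argument (essentially the one in Christol's original paper and in Allouche--Shallit, Chapter~12): identify the $q$-kernel of $\a$ with the orbit of $f$ under the Cartier section operators $\Lambda_r$, and exploit the Frobenius semilinearity $\Lambda_r(h^q g)=h\,\Lambda_r(g)$ in both directions. The automatic $\Rightarrow$ algebraic direction is correct as written. For the converse you have correctly located the two genuine issues. First, getting an Ore relation with $A_0\neq 0$: one clean way is to take a relation $\sum_i A_i f^{q^i}=0$ of minimal top index, assume $A_0=0$, and differentiate in $x$; since $(f^{q^i})'=0$ for $i\ge 1$ this gives $\sum_i A_i' f^{q^i}=0$, and iterating forces every $A_i$ to be a $p$-th power, so one may take $p$-th roots of the whole relation and lower the top index, a contradiction. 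Second, the closure of $M$ under the $\Lambda_r$: rewriting $f=-\sum_{i\ge 1}(A_i/A_0)f^{q^i}$ and applying $\Lambda_r$ to $A_0^{q-1}\cdot(\text{element of }M)$ makes the bookkeeping transparent; any $C\ge \max(\deg A_0,\ldots,\deg A_m)+q\deg A_0$ suffices.

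One small terminological point: the equivalence of $p$-automaticity and $q$-automaticity for $q$ a power of $p$ is normally attributed to Eilenberg (as the paper itself notes just after the statement), not to Cobham.
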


We also mention that a well-known result of Eilenberg proves that a sequence is $p$-automatic if and only if is $q$-automatic, for any $q$, power of $p$.

\end{subsection}

\subsection{Morphisms and Cobham's theorem}\label{3.Morphisms and Cobham's theorem}

Let $\mathcal A$ (respectively $\mathcal B$) be a finite alphabet and let $\mathcal{A}^*$ (respectively $\mathcal{B}^*$) be the corresponding free monoid. A morphism $\sigma$ is a map from $\mathcal{A}^*$ to $\mathcal{B}^*$ such that $\sigma(UV)=\sigma(U)\sigma(V)$ for all words $U, V \in \mathcal{A}^*$. Since the concatenation is preserved, it is then possible to define a morphism defined on $\mathcal A$.

Let $k$ be a positive integer. A morphism $\sigma$ is said to be \textit{$k$-uniform} if $\left|\sigma(a)\right|=k $ for any $a\in \A$. A $k$-uniform morphism will also be called $k$-morphism. If $k=1$, then $\sigma$ is simply called a \textit{coding}.

If $\mathcal {A}= \mathcal{B}$ we can iterate the application of $\sigma$. Hence, if $a\in \mathcal A$, $\sigma^0(a)=a$, $\sigma^i(a)=\sigma(\sigma^{i-1}(a))$, for every $i\geq 1$.
Let $\sigma: \mathcal{A}\rightarrow \mathcal{A}^*$ be a morphism. The set $\mathcal A^* \cup \mathcal A ^{\N}$ is endowed with its natural topology. 

Roughly, two words are close if they have a long common prefix. 
We can thus extend the action of a morphism by continuity to $\mathcal A^* \cup \mathcal A ^{\N}$. 
Then, a word $\a\in \mathcal{A}^{\N}$ is \emph{a fixed point} of a morphism $\sigma$ if $\sigma(\a)=\a$.\\

A morphism $\sigma$ is \emph{prolongable} on $a\in \mathcal{A}$ if $\sigma (a)=aX$, for some $X\in \mathcal{A}^{+}:=\mathcal{A}^*\backslash \{\varepsilon\}$ such that $\sigma^k(X)\neq \varepsilon$, for any $k\in \N$. If $\sigma$ is prolongable then the sequence $(\sigma^i(a))_{i\geq 0}$ converges to the infinite word $$\sigma^{\infty}(a)=\lim_{i\rightarrow \infty}\sigma^i(a)=aX\sigma(X)\sigma^2(X)\sigma^3(X)\cdots.$$

With this notation, we now can cite an important theorem of Cobham, which gives a characterization of $k$-automatic sequences in terms of $k$-uniform morphisms.
\begin{theorem}[Cobham, 1972] Let $k\geq 2$. Then a sequence $\a=(a_i)_{i\geq 0}$ is $k$-automatic is and only if it is the image, under a coding, of a fixed point of a $k$-uniform morphism.
 \end{theorem}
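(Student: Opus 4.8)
The statement is an equivalence, so the plan is to treat the two implications separately, both resting on a single combinatorial identity. Write $\A_k=\{0,\ldots,k-1\}$ for the input alphabet and recall that for a word $W=w_r\cdots w_0$ one has $[W]_k=\sum_i w_ik^i$, so that $W$ is read most-significant-digit first; I denote by $\mathrm{rep}_k(n)$ the canonical such representation. The heart of the matter is the following observation. If $\sigma$ is a $k$-uniform morphism prolongable on a letter $a$, with fixed point $u=\sigma^{\infty}(a)$ over a finite alphabet $\mathcal B$, then, since $u=\sigma(u)=\sigma(u[0])\sigma(u[1])\cdots$ is the concatenation of the blocks $\sigma(u[n])$, each of length $k$, one has
\[
u[kn+j]=\bigl(\sigma(u[n])\bigr)[j], \qquad 0\le j<k,
\]
for every $n\ge 0$ (indices are $0$-based). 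First I would record this identity, as it drives both directions.

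For the implication that a morphic image is automatic, suppose $\a=\tau(u)$ with $u=\sigma^{\infty}(a)$ as above and $\tau$ a coding onto $\Delta$. I would build a $k$-DFAO taking $Q=\mathcal B$ as set of states, $a$ as initial state, $\tau$ as output function, and transition $\delta(b,j):=(\sigma(b))[j]$. Since $\sigma(a)=aX$, we have $\delta(a,0)=a$, so leading zeros are absorbed and the automaton is well defined on every representation of a given integer. An immediate induction on the digits of $n$, using the displayed identity, yields $\delta(a,\mathrm{rep}_k(n))=u[n]$, whence $\tau(\delta(a,W))=\tau(u[n])=a_n$ for all $W$ with $[W]_k=n$. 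Thus $\a$ is $k$-automatic.

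For the converse, I would reverse the construction: starting from a $k$-DFAO $M=(Q,\delta,q_0,\Delta,\varphi)$ generating $\a$, define the $k$-uniform morphism $\sigma$ on the alphabet $Q$ by $\sigma(q)=\delta(q,0)\delta(q,1)\cdots\delta(q,k-1)$ and keep $\varphi$ as the coding. The same identity and induction then show that the fixed point of $\sigma$, once it exists, has $n$-th letter $\delta(q_0,\mathrm{rep}_k(n))$, so its image under $\varphi$ is $\a$. The one point requiring care, and the main obstacle, is that $\sigma$ need not be prolongable on $q_0$: prolongability forces $\delta(q_0,0)=q_0$, which the definition of automaticity does not guarantee — it guarantees only that leading zeros preserve the output. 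I would remove this obstruction by adjoining a fresh initial state $q_0'$ with $\varphi(q_0')=a_0$, $\delta(q_0',0)=q_0'$, and $\delta(q_0',j)=\delta(q_0,j)$ for $1\le j<k$. Since prepending zeros to a base-$k$ representation alters neither the represented integer nor (by hypothesis) the output, this modified automaton still generates $\a$; now $\sigma(q_0')=q_0'\,\delta(q_0,1)\cdots\delta(q_0,k-1)$ begins with $q_0'$, so $\sigma$ is prolongable on $q_0'$ and $\sigma^{\infty}(q_0')$ is the desired fixed point. Applying the coding $\varphi$ completes the argument.
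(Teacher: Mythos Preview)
The paper does not give its own proof of Cobham's theorem; it is quoted as a classical result, with pointers to Cobham's original article and to Theorem~6.3.2 of Allouche--Shallit. Your argument is correct and is precisely the standard proof found in those references: the identity $u[kn+j]=(\sigma(u[n]))[j]$ is the engine in both directions, the automaton-from-morphism construction is the canonical one, and your handling of the prolongability obstruction by adjoining a fresh initial state fixed by the digit $0$ is exactly the usual repair. One cosmetic slip: after introducing $q_0'$, the $n$-th letter of the fixed point is $\delta(q_0',\mathrm{rep}_k(n))$ rather than $\delta(q_0,\mathrm{rep}_k(n))$; these agree for $n\ge 1$ (the canonical representation then starts with a nonzero digit, so the first transition from $q_0'$ coincides with that from $q_0$), and for $n=0$ the value is $q_0'$ with $\varphi(q_0')=a_0$ by construction, so nothing is lost.
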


\end{section}

\begin{section}{Proof of Theorem \ref{MUIN11}}\label{3.maintheorem}


\medskip

Theorem \ref{MUIN11} is an easy consequence of the more precise result established in 
Theorem \ref{muin112}. In order to prove Theorem \ref{muin112}, we first establish our approximation lemma 
which is the analog of a classical result in Diophantine approximation (Lemma \ref{approximation_lemma}). Then 
we show how to construct, starting with an arbitrary algebraic Laurent series $f$ with coefficients in a finite field,  
an infinite sequence of rational approximations of $f$ satisfying the assumptions of our approximation lemma. 
We thus deduce the expected upper bound for the irrationality exponent of $f$. 

All along this section, we provide comments and remarks allowing one to improve in most cases 
this general upper bound (see in particular Remark \ref{3.sameprefix} and Section \ref{3.Equivalentcondition}).

\begin{subsection}{Maximal repetitions in automatic sequences}\label{Maximal repetitions in automatic sequences}

Before stating our approximation lemma, we first recall a useful result, which will allow us later 
to control 
repetitive patterns occurring as prefixes of automatic sequences. The proof of the following lemma can be found in \cite{adamczewski_cassaigne} (Lemma 5.1, page 1356). Before stating it, we recall that the kernel $K_k$ of a 
$k$-automatic sequence $\a=(a_i)_{i\geq 0}$ is defined as the set of all subsequences of the form 
$(a_{k^ni+l})_{i\geq 0}$, where $n\geq 0$ and $0\leq l<k^n$.
Furthermore, we recall that by a result of Eilenberg 
a sequence $\a$ is $k$-automatic if and only if $K_k(\a)$ is finite.

\begin{lemma}\label{3.ker} Let $\a$ be a non-ultimately periodic $k$-automatic sequence defined on a alphabet $\mathcal A$. Let $U\in \A^*, V\in \A^* \setminus \{\varepsilon\}$ and $\omega \in \Q$ be such that $UV^{\omega}$ is a prefix of the sequence $\a$. Let $s$ be the cardinality of the $k$-kernel of $\a$. Then we have the following inequality:
$$\frac{\vert UV^{\omega}\vert}{\vert UV\vert } < k^s.$$
\end{lemma}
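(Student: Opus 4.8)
The plan is to argue by contraposition: assuming $\a$ is not ultimately periodic, I will show $|UV^{\omega}|/|UV| < k^{s}$. Write $u=|U|$, $v=|V|\ge 1$, and $L:=|UV^{\omega}|$, so that the target inequality reads $L/(u+v)<k^{s}$. The first move is to repackage the prefix hypothesis as a clean periodicity statement: chaining the equalities forced by consecutive copies of $V$, one checks that $a_{p}=a_{u+((p-u)\bmod v)}$ for every position $p$ in the window $[u,L)$. In other words $\a$ restricted to $[u,L)$ is exactly $v$-periodic, and the whole problem becomes that of bounding the length of a $v$-periodic window starting at position $u$ in terms of $u$, $v$ and the kernel size $s$.

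The engine is the family of $k$ decimation operators $\Delta_{r}\colon (x_{n})_{n}\mapsto (x_{kn+r})_{n}$ for $0\le r<k$, whose iterated images of $\a$ are precisely the elements of the $k$-kernel; hence there are only $s$ of them. The key computation is the behaviour of a periodic window under $\Delta_{r}$: if $\a$ is $v$-periodic on $[u,L)$, then $\Delta_{r}\a$ is periodic on the rescaled window $[\lceil (u-r)/k\rceil,\ \lfloor (L-r)/k\rfloor)$ with new period $v/\gcd(v,k)$, since $(\Delta_{r}\a)_{n}$ depends on $n$ only through $kn\bmod v$. Thus each decimation divides both endpoints $u$ and $L$ by (about) $k$ while dividing the period by $\gcd(v,k)$, and in particular keeps a genuine periodic window alive as long as $L$ is large compared with $u$ and $v$. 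Iterating, I produce a chain $\a=\mathbf{b}_{0},\mathbf{b}_{1},\dots,\mathbf{b}_{s}$ of kernel elements, where $\mathbf{b}_{t}$ is periodic on a window with endpoints of order $u/k^{t}$ and $L/k^{t}$; the assumption $L/(u+v)\ge k^{s}$ is exactly what guarantees these windows stay long through all $s$ steps.

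Since the kernel has only $s$ elements, among the $s+1$ sequences $\mathbf{b}_{0},\dots,\mathbf{b}_{s}$ two must coincide, say $\mathbf{b}_{i}=\mathbf{b}_{j}$ with $i<j$. As $\mathbf{b}_{j}$ is obtained from $\mathbf{b}_{i}$ by a decimation of modulus $K=k^{\,j-i}$ at some residue $\rho$, this coincidence is a self-similarity relation $b_{n}=b_{Kn+\rho}$ for the common sequence $\mathbf{b}=(b_{n})$; moreover $\mathbf{b}$ still carries a long periodic window (of period dividing $v$ and length of order $L/k^{j}$, which dominates $u/k^{i}$ precisely because $L/(u+v)\ge k^{s}$). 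The self-map $n\mapsto Kn+\rho$ pushes this finite periodic window rightward while the relation $b_{n}=b_{Kn+\rho}$ transports the periodic pattern, and the overlap guaranteed by the surviving length lets the successively inflated windows chain together without gaps, forcing ultimate periodicity and, after tracing the structure back to $\a$ itself, contradicting the hypothesis. This yields $L/(u+v)<k^{s}$.

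The step I expect to be the main obstacle is precisely this last one: upgrading ``long periodic window plus kernel coincidence'' to honest ultimate periodicity of $\a$ itself. One must be careful, because a sampled subsequence (an element of the kernel) being ultimately periodic does \emph{not} on its own force $\a$ to be ultimately periodic; it is the self-similar scaling $b_{n}=b_{Kn+\rho}$ coming from the coincidence, glued to the residual periodic window, that genuinely closes the gap and must be shown to propagate back up the decimation chain. Controlling the overlap of the inflated windows so that they cover all large indices is also exactly what pins the constant down to $k^{s}$ rather than some larger power, so the quantitative bookkeeping and the qualitative conclusion have to be carried out together.
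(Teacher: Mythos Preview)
The paper does not actually prove this lemma: it simply cites Adamczewski--Cassaigne (Lemma~5.1). So there is no ``paper's own proof'' to compare against beyond the reference; your plan has to stand on its own feet.

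Your overall architecture --- decimate $\a$ repeatedly, pigeonhole among the $s+1$ resulting kernel elements, extract a self-similarity $b_{n}=b_{Kn+\rho}$ from the coincidence $\mathbf{b}_{i}=\mathbf{b}_{j}$, and combine it with the surviving periodic window --- is exactly the right shape, and you are also right that the last step is the crux. But the sketch you give for that step does not work as written. The map $\phi:n\mapsto Kn+\rho$ does \emph{not} push the periodic interval $[\alpha,\beta)$ to another interval: it sends it to the arithmetic progression $\{Kn+\rho:\alpha\le n<\beta\}$, a set that hits only one residue class modulo $K$. So ``successively inflated windows chain together without gaps'' is false on its face --- the inflated windows are sparse and leave $K-1$ out of every $K$ indices untouched. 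Concretely, take $K=2$, $\rho=0$: a sequence $(b_{n})$ with $b_{n}=b_{2n}$ is completely free on the odd integers, and imposing $b_{n}=b_{n+3}$ on $[1,100)$ still leaves $b$ unconstrained on odd $n\ge 101$; one checks that the resulting $b$ need not be ultimately periodic (the value $b_{m}$ depends on the odd part of $m$ reduced mod $3$, which is not a periodic function of $m$). Thus ``self-similarity plus one long periodic window implies ultimate periodicity'' is simply false for an abstract sequence, and your argument needs a further ingredient tied to the fact that \emph{all} kernel elements of $\a$, not just the one you landed on, inherit long periodic windows from the original prefix $UV^{\omega}$.

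There is a second gap you flag but do not close: even if you did establish that $\mathbf{b}=\mathbf{b}_{i}$ is ultimately periodic, you must still climb back to $\a$. A single kernel element being ultimately periodic does not force $\a$ to be (e.g.\ interleave the zero sequence with Thue--Morse), and ``propagate back up the decimation chain'' is not a mechanism, since decimation loses information. The actual Adamczewski--Cassaigne argument organises the induction so that the periodicity statement being extended is about $\a$ itself (or equivalently about all residue classes at once), not about an isolated decimated subsequence; that is what you are missing.
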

\end{subsection}

\begin{subsection}{An approximation lemma}

We start with the following result which is, in fact, an analog version of Lemma 4.1 in \cite{adamczewski_rivoal} for Laurent series with coefficients in a finite field. We also recall the proof, since it is not very long and it may also be of independent interest.

\begin{lemma}\label{approximation_lemma} Let $f(T)$ be a Laurent series with coefficients in $\F_q$. Let $\delta, \rho$ and $\theta$ be real numbers such that $0< \delta \leq \rho$ and $\theta \geq 1$. Let us assume that there exists a sequence $(P_n/Q_n)_{n\geq 1}$ of rational fractions with coefficients in $\F_q$ and some positive constants $c_0, c_1$ and $c_2$ such that
\begin{itemize}
\item[(i)] $$|Q_n| < |Q_{n+1}| \leq c_0 |Q_{n}|^{\theta}$$
\item[(ii)] $$\frac{c_1}{|Q_n|^{1+\rho}} \leq |f-\frac{P_n}{Q_n}| \leq \frac{c_2}{|Q_{n}|^{1+\delta}}.$$
\end{itemize}
Then, the irrationality measure $\mu(f)$ satisfies
\begin{equation}\label{muin} 1+\delta \leq \mu(f) \leq \frac{\theta(1+\rho)}{\delta}.
 \end{equation}
Furthermore, if we assume that there is $N \in \N^*$ such that for any $n\geq N$, $(P_n, Q_n)=1$, 
then, we have
$$1+\delta \leq \mu(f) \leq \mathrm{max }(1+\rho, 1+\frac{\theta}{\delta}).$$
In this case, if $\rho=\delta$ and $\theta \leq \delta^2$, then $\mu(f)=1+\delta$.
 \end{lemma}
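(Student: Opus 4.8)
The plan is to prove the lower bound and the (first) upper bound separately, since they require genuinely different ideas, and then to derive the sharper statements as refinements. For the lower bound $\mu(f)\geq 1+\delta$, I would argue directly from the definition of the irrationality exponent. The family $(P_n/Q_n)$ gives infinitely many rational approximations; by hypothesis (i) the denominators $|Q_n|$ are strictly increasing, so in particular the fractions $P_n/Q_n$ are pairwise distinct and $|Q_n|\to\infty$. The right-hand inequality in (ii) says $|f-P_n/Q_n|\leq c_2/|Q_n|^{1+\delta}$. For any fixed $\tau<1+\delta$, the factor $c_2/|Q_n|^{1+\delta}$ is eventually smaller than $1/|Q_n|^{\tau}$ once $|Q_n|$ is large enough (because $|Q_n|^{\tau-1-\delta}\to 0$ and absorbs the constant $c_2$). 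Hence the defining inequality has infinitely many solutions for every $\tau<1+\delta$, which gives $\mu(f)\geq 1+\delta$ by taking the supremum.

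For the upper bound, I would invoke the approximation mechanism encoded in hypotheses (i) and (ii): the point is that consecutive approximants are close enough together that any \emph{other} good approximation $P/Q$ must essentially coincide with one of them, so its quality cannot beat what the $P_n/Q_n$ already achieve. Concretely, suppose $P/Q$ satisfies $|f-P/Q|<1/|Q|^{\tau}$ with $|Q|$ large, and choose the index $n$ with $|Q_n|\leq$ (a suitable power of) $|Q|< |Q_{n+1}|$. The two key facts are: first, if $P/Q\neq P_n/Q_n$ then $|P_n/Q_n - P/Q|\geq 1/(|Q_n||Q|)$ since the numerator of the difference is a nonzero polynomial (this is the non-archimedean/ultrametric analogue of $|a/b-c/d|\geq 1/(bd)$, and is where the field-of-Laurent-series structure is used); second, by the triangle (ultrametric) inequality this forces $1/(|Q_n||Q|)\leq |P_n/Q_n-P/Q|\leq \max(|f-P_n/Q_n|,|f-P/Q|)$. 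Feeding in the lower bound $c_1/|Q_n|^{1+\rho}\leq|f-P_n/Q_n|$ from (ii) and the control (i) relating $|Q_{n+1}|$ to $|Q_n|^{\theta}$, one compares exponents and finds that $\tau$ cannot exceed $\theta(1+\rho)/\delta$. The main obstacle here is organizing the case distinction cleanly: one must separately handle $P/Q=P_n/Q_n$ and calibrate exactly how large $\tau$ can be, matching $|Q|$ against the correct approximant $Q_n$; the bookkeeping of exponents is the heart of the argument, and the choice of which $n$ to pair with a given $Q$ is what produces the factor $\theta$.

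For the refinement under the coprimality hypothesis $(P_n,Q_n)=1$ for $n\geq N$, the improvement comes from the fact that coprimality pins down the denominator: when $P/Q$ is a good approximation with $|Q|$ in the relevant range, coprimality of $(P_n,Q_n)$ lets one conclude that either $P/Q$ equals $P_n/Q_n$ (in which case its quality is governed by (ii) with exponent at most $1+\rho$), or $|Q|\geq |Q_{n+1}|$, in which case the distance is bounded below using $|Q_{n+1}|\leq c_0|Q_n|^{\theta}$ and the exponent $1+\theta/\delta$ emerges. Taking the worse of the two cases yields $\mu(f)\leq\max(1+\rho,\,1+\theta/\delta)$. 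Finally, the closing assertion is a direct specialization: setting $\rho=\delta$ makes the first term $1+\rho=1+\delta$, and the condition $\theta\leq\delta^2$ makes $\theta/\delta\leq\delta$, so the second term $1+\theta/\delta\leq 1+\delta$ as well; thus the upper bound collapses to $1+\delta$, which together with the already-established lower bound $\mu(f)\geq 1+\delta$ forces the equality $\mu(f)=1+\delta$.
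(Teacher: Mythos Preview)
Your overall strategy matches the paper's: establish the lower bound directly from (ii), and for the upper bound take an arbitrary approximation $P/Q$, pair it with a well-chosen $P_n/Q_n$, and split according to whether they coincide. Two details in your write-up need correcting, however.

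In the case $P/Q \neq P_n/Q_n$, after obtaining $1/(|Q||Q_n|) \leq \max(|f-P_n/Q_n|,|f-P/Q|)$ you propose to ``feed in the lower bound $c_1/|Q_n|^{1+\rho}\leq|f-P_n/Q_n|$.'' That is the wrong direction: a lower bound on $|f-P_n/Q_n|$ tells you nothing about $|f-P/Q|$ here. What is needed is the \emph{upper} bound $|f-P_n/Q_n|\leq c_2/|Q_n|^{1+\delta}$, combined with the calibration of $n$ (the paper takes $|Q_{n-1}|<(2c_2|Q|)^{1/\delta}\leq |Q_n|$), so that $|f-P_n/Q_n|<1/(|Q||Q_n|)$ and the maximum is forced to be $|f-P/Q|$. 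The lower bound from (ii) enters only in the complementary case $P/Q=P_n/Q_n$, where $|f-P/Q|=|f-P_n/Q_n|\geq c_1/|Q_n|^{1+\rho}$ and one then controls $|Q_n|$ by $|Q|$ via (i); this is precisely where the factor $(1+\rho)$ in the bound $\theta(1+\rho)/\delta$ originates. For the coprimality refinement, your dichotomy ``either $P/Q=P_n/Q_n$ or $|Q|\geq|Q_{n+1}|$'' is not the right one. The case split is still $P/Q=P_n/Q_n$ versus $P/Q\neq P_n/Q_n$; the gain from coprimality is that when $P/Q=P_n/Q_n$ with both fractions reduced one gets $|Q|=|Q_n|$ exactly, so $|f-P/Q|\geq c_1/|Q|^{1+\rho}$ without the loss coming from converting $|Q_n|$ into a power of $|Q|$. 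The other case is unchanged and yields the exponent $1+\theta/\delta$, whence the maximum.
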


\begin{proof}

 The left-hand side inequality is clear. We thus turn our attention to the second inequality. Let $P/Q \in \F_q(T)$ such that $\vert Q\vert$ is large enough. Then there exists a unique integer $n=n(Q)\geq 2$ such that 
\begin{equation} \label{applemma}
 \vert Q_{n-1}\vert < (2c_2 \vert Q \vert)^{\frac{1}{\delta}} \leq \vert Q_n \vert.
\end{equation}
If $P/Q\neq P_n/Q_n$ then 
$$\left| \frac{P}{Q}- \frac{P_n}{Q_n} \right| \geq \frac{1}{\vert Q Q_n \vert}, $$
and using Eq. (\ref{applemma}) and (ii) we get that 
\begin{equation}\nonumber
\left| f- \frac{P_n}{Q_n}\right| \leq \frac{c_2}{|Q_n|^{1+\delta}}=\frac{c_2}{|Q_n||Q_n|^{\delta}}\leq \frac{1}{2|Q||Q_n|}.
 \end{equation}
By the triangle inequality, we have that
$$\left| f-\frac{P}{Q} \right| \geq \left|\frac{P}{Q}-\frac{P_n}{Q_n} \right| - \left|f-\frac{P_n}{Q_n}\right|. $$
Eq. (i) together with Eq. (\ref{applemma}) imply that $\vert Q_n \vert \leq c_0 \vert Q_{n-1}\vert^{\theta}< c_0(2c_2 \vert Q \vert) ^{\theta / \delta} $. Thus, 
$$ \left| f- \frac{P}{Q}\right| \geq \frac{1}{2|Q||Q_n|} \geq \frac{1}{2|Q|c_0(2c_2 \vert Q \vert) ^{\theta / \delta}} \geq 
\frac{c_3}{\vert Q \vert ^{\frac{\theta(1+\rho)}{\delta}} }$$
since $1+\theta / \rho \leq \theta +\theta / \rho$ (because $\theta\geq 1$) and $c_3:=1/2c_0(2c_2)^{\theta/\delta}$.

On the other hand, if $P/Q=P_n/Q_n$, then 
$$\left| f- \frac{P}{Q}\right| = \left| f- \frac{P_n}{Q_n}\right|\geq \frac{c_1}{\vert Q_n\vert^{1+\rho}}\geq \frac{c_1}{(c_0 (2c_2\vert Q\vert)^{\theta/\rho})^{1+\rho}} =\frac{c_4}{\vert Q \vert ^{\frac{\theta (1+\rho)}{\delta}}}, $$
where $c_4=c_1/c_0^{1+\rho} (2c_2)^{\frac{\theta(1+\rho)}{\delta}}$.
This ends the proof.

%
\medskip

The case where $(P_n, Q_n)=1$ is treated in a similar way and we refer the reader to Lemma 4.1 in \cite{adamczewski_rivoal} (page 10). 
The proof consists, as previously, of two cases, but, when $P_n/Q_n=P/Q$ and $P/Q$ is reduced, then $Q_n=Q$; this permits to obtain an improved upper bound.
\end{proof}

Note that the second part of Lemma \ref{approximation_lemma} is also known as a Voloch's Lemma and for more details, we refer the reader to the original paper of \cite{Voloch88}, but also to Thakur's monograph \cite{thakur}, page 314.

\end{subsection}

\begin{subsection} {Construction of rational approximations via Christol's theorem}\label{3.construction}

Let $$f(T):=\sum_{i\geq 0}{a_iT^{-i}} \in \F_q[[T^{-1}]]$$ be an irrational algebraic Laurent series over $\F_q(T)$.

 We recall that, by Christol's theorem, the sequence $\a:=(a_i)_{i\geq 0}$ is $p$-automatic. 
According to Cobham's theorem, there exist $m\geq 1$, $$\sigma: \A_m \mapsto \A_m^*$$ a $p$-morphism and $$\varphi: \A_m \mapsto \F_q$$ a coding such that $\a=\varphi(\sigma^{\infty}(a))$ where $a \in \mathcal A_m$.

In all that follows, we let $f_{\a}(T)=\sum_{i\geq 0}{a_iT^{-i}}$ denote the Laurent series associated with the infinite word $\a=(a_i)_{i\geq 0}$.

 We also give the following definition for a polynomial associated with a finite word.

\begin{definition}\label{T.P_U}
 Let $U=a_0 a_1\cdots a_{k-1}$ be a finite word over a finite alphabet. We associate with $U$ the polynomial $P_{U}(T):=\sum_{j=0}^{k-1}{a_{k-1-j}T^j}$. If $U=\varepsilon$, we set $P_{ U}(T)=0$.
\end{definition}
 For example, if we consider the word $U=1020031\in \A_5$ then $$P_U(T)=T^6+2T^4+3T+1$$ is a polynomial with coefficients in $\F_5$.


\medskip
Using this notation, we have the following two lemmas.

\begin{lemma}\label{f_UV} Let $U, V$ be two finite words such that $|U|=k \in \N$ and $|V|=\ell \in \N$ and let $\a:=UV^{\infty}$. Then, 
 $$f_{\a}(T)=\frac{P_{ U}(T)(T^{\ell}-1)+P_{ V}(T)}{T^{k-1}(T^{\ell}-1)}.$$
If $k=0$, we have
$$f_{\a}(T)=\frac{TP_{ V}(T)}{T^{\ell}-1}.$$
\end{lemma}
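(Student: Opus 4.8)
The plan is to compute the Laurent series $f_{\a}(T)=\sum_{i\geq 0}a_i T^{-i}$ directly from the periodic structure $\a=UV^\infty$ and recognize the result as the claimed rational function. Writing $U=a_0\cdots a_{k-1}$ and $V=a_k\cdots a_{k+\ell-1}$, the infinite word $\a$ has $a_i$ for $0\le i\le k-1$ coming from $U$, and thereafter the block $V$ repeats with period $\ell$. So I would split the sum as
\begin{equation}\nonumber
f_{\a}(T)=\sum_{i=0}^{k-1}a_i T^{-i}+\sum_{m\geq 0}\sum_{j=0}^{\ell-1}a_{k+j}\,T^{-(k+m\ell+j)}.
\end{equation}
The first (finite) sum is the polynomial part contributed by $U$; the second is a geometric series in $T^{-\ell}$ whose coefficient is the contribution of one copy of $V$.

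Next I would evaluate each piece in terms of the associated polynomials from Definition \ref{T.P_U}. By definition $P_U(T)=\sum_{j=0}^{k-1}a_{k-1-j}T^j$, so $P_U(T)/T^{k-1}=\sum_{j=0}^{k-1}a_{k-1-j}T^{j-(k-1)}=\sum_{i=0}^{k-1}a_i T^{-i}$ after reindexing $i=k-1-j$; this is exactly the first sum. For the tail, summing the geometric series gives $\sum_{m\geq 0}T^{-m\ell}=1/(1-T^{-\ell})=T^{\ell}/(T^{\ell}-1)$, and the inner finite sum over one period $V$ rearranges, again by the definition of $P_V$, into $T^{-(k-1)}P_V(T)/(T^\ell-1)$ up to a bookkeeping factor that I would track carefully. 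Combining the two contributions over the common denominator $T^{k-1}(T^\ell-1)$ yields the stated formula.

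The main obstacle, and the step deserving the most care, is the exact index bookkeeping in the tail: matching the exponent $-(k+m\ell+j)$ against the power-of-$T$ convention built into $P_V$ (whose coefficients are read in reverse order) and confirming that the overall $T$-power on the denominator comes out as $T^{k-1}$ rather than $T^{k}$ or $T^{k-1+\ell}$. I expect the cleanest route is to verify the identity
\begin{equation}\nonumber
\sum_{m\geq 0}\sum_{j=0}^{\ell-1}a_{k+j}T^{-(k+m\ell+j)}=\frac{P_V(T)}{T^{k-1}(T^\ell-1)}
\end{equation}
on its own, since the factor $T^{\ell}/(T^\ell-1)$ from the geometric series combines with the $T^{-k}$ shift to produce precisely the $T^{k-1}$ in the denominator. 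Adding this to $P_U(T)(T^\ell-1)/\bigl(T^{k-1}(T^\ell-1)\bigr)$ gives the first displayed formula.

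For the degenerate case $k=0$, the word $U$ is empty so $P_U=0$ and the prefix contributes nothing. The series is then the purely periodic $\sum_{m\geq 0}\sum_{j=0}^{\ell-1}a_j T^{-(m\ell+j)}$, and the same geometric-series computation specializes to $f_{\a}(T)=TP_V(T)/(T^\ell-1)$; I would note this is consistent with setting $k=0$ in the general formula once one interprets $T^{k-1}=T^{-1}$, which accounts for the extra factor of $T$ in the numerator. A short sanity check on a small example (say $U=\varepsilon$, $V$ a single letter) confirms the normalization and completes the argument.
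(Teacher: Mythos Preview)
Your proposal is correct and follows essentially the same approach as the paper: split $f_{\a}$ into the prefix contribution from $U$ and the periodic tail from $V^\infty$, identify each with $P_U(T)/T^{k-1}$ and $P_V(T)/\bigl(T^{k-1}(T^\ell-1)\bigr)$ respectively via the definition of $P_W$ and a geometric series, and combine over the common denominator. The paper carries out exactly this computation (with separate letters $b_j$ for $V$), and the $k=0$ case is handled identically by specialization.
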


\begin{proof} Let $U:=a_0a_1\cdots a_{k-1}$ and $V:=b_0b_1\cdots b_{\ell-1}$. Writing the associated Laurent series with $\a:=UV^{\infty}$, we have
$$f_{\a}(T)=(a_0+a_1T^{-1}+\cdots+a_{k-1}T^{-(k-1)})+(b_0T^{-k}+\cdots+b_{\ell-1}T^{-(k+l-1)})+\cdots $$
and then, factorizing $T^{-(k-1)}, T^{-k}, T^{-(k+\ell)}, T^{-(k+2\ell)}, \ldots$ and using the definition of $P_U(T)$ and $P_V(T)$ (see def. \ref{T.P_U}), we obtain
\begin{align*}f_{\a}(T)=&\;T^{-(k-1)}P_U(T)+T^{-k}(b_0+b_1T^{-1}+\cdots+b_{\ell-1}T^{-(\ell-1)})+\\
&+T^{-(k+\ell)}(b_0+\cdots+b_{\ell-1}T^{-(\ell-1)})+\cdots\\
=&\;T^{-(k-1)} P_U(T)+T^{-(k+\ell-1)}P_V(T)(1+T^{-\ell}+T^{-2\ell}+T^{-3\ell}+\cdots)\\
=&\; \frac{P_{ U}(T)(T^{\ell}-1)+P_{ V}(T)}{T^{k-1}(T^{\ell}-1)}.
\end{align*}
When $k=0$, that is, when $U=\varepsilon$, we just have to replace $k=0$ in the identity given above. 
\end{proof}

\begin{lemma}\label{3.value}
Let $\a=(a_i)_{i\geq 0}$ and $\b=(b_i)_{i\geq 0}$ be two infinite sequences over a finite alphabet, 
satisfying $a_i=b_i$ for $0\leq i \leq L-1$, where $L\in \N^*$. Then, we have
$$\vert f_{\a}-f_{\b} \vert \leq \frac{1}{\vert T \vert ^L}.$$
the equality being obtained when $a_L\neq b_L$.
\end{lemma}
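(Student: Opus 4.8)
The plan is to prove Lemma \ref{3.value} directly from the definition of the absolute value on $\F_q((T^{-1}))$, exploiting the fact that the difference $f_{\a}-f_{\b}$ is itself a Laurent series whose first several coefficients vanish. First I would write
$$f_{\a}(T)-f_{\b}(T)=\sum_{i\geq 0}(a_i-b_i)T^{-i},$$
which is a legitimate element of $\F_q((T^{-1}))$ since both series are formal power series in $T^{-1}$. The hypothesis $a_i=b_i$ for $0\leq i\leq L-1$ means that the coefficients of $T^{0},T^{-1},\dots,T^{-(L-1)}$ in this difference are all zero, so the series actually starts at the term $T^{-L}$:
$$f_{\a}(T)-f_{\b}(T)=\sum_{i\geq L}(a_i-b_i)T^{-i}.$$

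Next I would apply the definition of $|\cdot|$ recalled in the introduction. If the difference is the zero series, its absolute value is $0$ and the inequality holds trivially. Otherwise, let $i_1\geq L$ be the smallest index for which $a_{i_1}-b_{i_1}\neq 0$; then by definition $|f_{\a}-f_{\b}|=|T|^{-i_1}$. Since $i_1\geq L$ and $|T|>1$, we have $|T|^{-i_1}\leq |T|^{-L}$, which is exactly the claimed bound $|f_{\a}-f_{\b}|\leq 1/|T|^L$. For the equality statement, I would observe that if in addition $a_L\neq b_L$, then the smallest index with a nonzero coefficient is precisely $i_1=L$, so $|f_{\a}-f_{\b}|=|T|^{-L}=1/|T|^L$.

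This argument is essentially a restatement of the ultrametric (non-Archimedean) valuation structure of $\F_q((T^{-1}))$, so there is no genuine obstacle; the only point requiring a little care is handling the degenerate case where $f_{\a}=f_{\b}$ (the difference being the zero series), which forces the inequality to hold as $0\leq 1/|T|^L$ and correctly excludes equality unless $a_L\neq b_L$. I do not anticipate any hard step: the entire content is tracking the leading nonzero coefficient of the difference and reading off its absolute value from the stated definition $|f|=|T|^{-i_0}$, where $i_0$ is the index of the first nonzero coefficient.
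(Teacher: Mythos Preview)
Your proof is correct and follows exactly the approach the paper intends: the paper's own proof is the single sentence ``This lemma immediately follows from the definition of an ultrametric norm,'' and your argument simply spells out that observation in detail by identifying the first nonzero coefficient of the difference.
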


\begin{proof} This lemma immediately follows from the definition of an ultrametric norm.
\end{proof}


We now construct a sequence of rational fractions $(P_n/Q_n)_{n\geq 0}$ satisfying the assumptions of Lemma \ref{approximation_lemma}. The approach we use appears in \cite{adamczewski_cassaigne} and is essentially based on the repetitive patterns occurring in automatic sequences.

The sequence $\a$ being $p$-automatic, the $p$-kernel is finite. We let $e$ denote the number of states of the minimal automaton generating $\a$ (in \textit{direct reading}) and $s$ the cardinality of the $p$-kernel. 
 Let consider a prefix $P$ of $\sigma^{\infty}(a)$ of length $e+1$. Observe that $e$ is greater than or equal to the cardinality of the internal alphabet of $\a$, that is, $\A_m$. It follows, from the pigeonhole principle, 
 that there exists a letter $b \in \A_m$ occurring at least twice in $P$. This means that there exist two (possibly empty) words $U'$ and $V'$ and a letter $b$ (both defined over $\A_m$) such that 
 $$P:=U'bV'b.$$
 Now, if we set $U:=U'$, $V:=bV'$, $\vert U\vert:=k$, $\vert V \vert: =\ell$, $\omega:=1+1/{\ell}$, 
 then $UV^{\omega}$ is a prefix of $\sigma^{\infty}(a)$.\\
 


Let $n\in \N$, $U_n:=\varphi(\sigma^n(U))$ and $V_n=\varphi(\sigma^n(V))$. Since $$\a=\varphi(\sigma^{\infty}(a))$$ then, for any $n\in \N$, $U_nV_n^{\omega}$ is a prefix of $\a$. Notice also that $ \vert U_n\vert=\vert U\vert p^n$ and $\vert V_n\vert=\vert V \vert p^n$ and the sequence $(\vert V_n\vert)_{n\geq 1}$ is increasing.

For any $n\geq 1$, we set 
\begin{equation}\label{3.Q_n}
Q_n(T)=T^{kp^n-1}(T^{\ell p^n}-1).
\end{equation}

Let $\textbf{c}_n$ denote the infinite word $U_nV_n^{\infty}$. There exists $P_n(T)\in \F_q[T]$ such that 
$$f_{\textbf{c}_n}(T)=\frac{P_n(T)}{Q_n(T)}.$$
More precisely, by Lemma \ref{f_UV}, the polynomial $P_n(T)$ may be defined by the following formula \begin{equation}\label{3.P_n}P_n(T)=P_{U_n}(T)(T^{\ell p^n}-1)+P_{ {V_n}}(T). \end{equation}

Since $\a$ and $\textbf{c}_n$ have the common prefix $U_nV_n^{\omega}$ it follows by Lemma \ref{3.value} that
\begin{equation}\label{frarat}
\left| f_{\a}-\frac{P_n}{Q_n}\right |\leq \frac{c_2}{\vert Q_n \vert^{\frac{k+\omega \ell}{k+\ell}}}, 
\end{equation}
where $c_2=\vert T \vert ^{\frac{k+l}{k+\omega l}}$.

Furthermore, if $\a$ and $\textbf{c}_n$ have the common prefix $U_nV_n^{\omega}$ and the $(k+\omega \ell)p^n+1$-th letters are different, then, by Lemma \ref{3.value}, Inequality (\ref{frarat}) becomes an equality. On the other hand, we also have the following result, which is an easy consequence of Lemma \ref{3.ker}.

\begin{lemma} Let $s$ be the cardinality of the $p$-kernel of the sequence $\a:=(a_i)_{i\geq 0}$. We have
$$\left| f_{\a}-\frac{P_n}{Q_n}\right | \geq \frac{1}{\vert Q_n\vert^{p^s}}.$$
\end{lemma}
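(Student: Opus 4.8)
The plan is to read the error $|f_{\a}-P_n/Q_n|$ directly off the length of the longest common prefix of $\a$ and $\textbf{c}_n$, and then to bound that length by Lemma \ref{3.ker}. First I would note that, since $f$ is irrational, $\a$ is not ultimately periodic, whereas $\textbf{c}_n=U_nV_n^{\infty}$ is ultimately periodic; hence $\a\neq\textbf{c}_n$ and the two words first disagree at some \emph{finite} position $L_n$. By Lemma \ref{3.value} this gives the exact identity $|f_{\a}-f_{\textbf{c}_n}|=|f_{\a}-P_n/Q_n|=|T|^{-L_n}$, so that a lower bound on the error amounts precisely to an upper bound on $L_n$.

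Next I would pin down the shape of this maximal common prefix. We already know that $U_nV_n^{\omega}$ with $\omega=1+1/\ell$ is a common prefix, so $L_n\geq|U_nV_n^{\omega}|=(k+\ell+1)p^n>|U_n|=kp^n$; in particular the agreement extends past $U_n$ into the periodic part. Since the common prefix is, by construction, a prefix of $\textbf{c}_n=U_nV_n^{\infty}$, it must be of the form $U_nV_n^{\omega_n}$ with $\omega_n=(L_n-kp^n)/(\ell p^n)\in\Q$ and $\omega_n\geq\omega$. This word $U_nV_n^{\omega_n}$ is simultaneously a prefix of $\a$, which is non-ultimately-periodic and $p$-automatic, so all hypotheses of Lemma \ref{3.ker} are met (with base $p$, $U=U_n$, $V=V_n\neq\varepsilon$, exponent $\omega_n$). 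Applying that lemma yields
$$L_n=|U_nV_n^{\omega_n}|<p^s\,|U_nV_n|=p^s(k+\ell)p^n.$$

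It then remains to compare this with $\deg Q_n$. From (\ref{3.Q_n}) we have $\deg Q_n=(kp^n-1)+\ell p^n=(k+\ell)p^n-1$, so $|Q_n|=|T|^{(k+\ell)p^n-1}$ and $|U_nV_n|=\deg Q_n+1$. Substituting the bound on $L_n$ into $|f_{\a}-P_n/Q_n|=|T|^{-L_n}$ gives the stated inequality $|f_{\a}-P_n/Q_n|\geq 1/|Q_n|^{p^s}$. The step requiring genuine care is exactly this final exponent bookkeeping: Lemma \ref{3.ker} is a \emph{strict} inequality phrased in terms of $|U_nV_n|=(k+\ell)p^n$, while the target is phrased in terms of $p^s\deg Q_n=p^s((k+\ell)p^n-1)$, so one has to track the difference of a unit between $|U_nV_n|$ and $\deg Q_n$ together with the integrality and strictness of the bound on $L_n$; this is where the power $p^s$ emerges and must be checked to hold for every $n$. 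Everything else is purely the dictionary between finite words, their Laurent series, and polynomial degrees already supplied by Lemmas \ref{f_UV} and \ref{3.value}, so I expect no obstacle beyond this computation; the sole structural input is the effective control of repetitive prefixes furnished by Lemma \ref{3.ker}.
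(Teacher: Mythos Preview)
Your approach is essentially identical to the paper's: both apply Lemma~\ref{3.ker} to bound the length of the maximal common prefix of $\a$ and $\textbf{c}_n$, and then convert this via Lemma~\ref{3.value} into a lower bound for $|f_{\a}-P_n/Q_n|$. Your presentation is in fact slightly more careful, since you make explicit the maximal exponent $\omega_n$ to which Lemma~\ref{3.ker} is applied, whereas the paper leaves this implicit.

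Your flagged concern about the final bookkeeping is exactly on point: from $L_n<p^s(k+\ell)p^n$ and $\deg Q_n=(k+\ell)p^n-1$ one only obtains $L_n\leq p^s\deg Q_n+p^s-1$, not $L_n\leq p^s\deg Q_n$, so the bound comes out as $|f_{\a}-P_n/Q_n|\geq |T|^{-(p^s-1)}/|Q_n|^{p^s}$ rather than $1/|Q_n|^{p^s}$. The paper's own proof has precisely the same discrepancy and records it as the constant $c_1=1/|T|^{p^s}$; this is harmless for the application to Lemma~\ref{approximation_lemma}, where only $\rho=p^s-1$ matters.
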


\begin{proof} Using Lemma \ref{3.ker} we obtain
$$\vert U_nV_n^{\omega}\vert < p^s \vert U_nV_n\vert.$$

This implies that $\a$ and $\textbf{c}_n$ cannot have the same first $p^s \vert U_nV_n\vert $ digits. Hence 
\begin{align*}
\left|f_{\a}-\frac{P_n}{Q_n}\right| & \geq \frac{1}{\vert T \vert ^{(|U_n|+ |V_n|)p^s}} \\
 &=\frac{1}{(\vert T \vert |Q_n|)^{p^s}} \geq \frac{c_1}{|Q_n|^{p^s}}.\end{align*}
 where $c_1:=1/\vert T \vert ^{p^s}$.
\end{proof}

This shows that $(P_n/Q_n)_{n\geq 1}$ satisfies the assumptions of Lemma \ref{approximation_lemma} with $\theta=p$, $\rho=p^s-1$ and $\delta=\frac{(\omega-1)\ell}{k+\ell}$. With this notation, we obtain the following theorem. 

\begin{theorem}\label{muin112}
Let $f_{\a}(T)=\sum_{i\geq 0}{a_iT^{-i}} \in \F_q[[T^{-1}]]$ be an algebraic Laurent series over $\F_q(T)$. Let $k, l, \omega, s$ be the parameters of $f_{\a}$ defined above. Then, the irrationality measure of $f_{\a}$, $\mu(f_{\a})$, satisfies the following inequality:
\begin{equation}\label{muin12}
 \frac{k+\omega\ell}{k+\ell}\leq \mu(f_{\a}) \leq \frac{p^{s+1}(k+\ell)}{(\omega-1)\ell}.\end{equation}
\end{theorem}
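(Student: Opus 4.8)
The plan is to observe that Theorem \ref{muin112} is now essentially a corollary of the approximation lemma (Lemma \ref{approximation_lemma}) applied to the sequence $(P_n/Q_n)_{n\geq 1}$ built in the construction above. Concretely, I would verify that this sequence satisfies hypotheses (i) and (ii) of Lemma \ref{approximation_lemma} for the explicit choice of parameters $\theta = p$, $\rho = p^s - 1$ and $\delta = \frac{(\omega-1)\ell}{k+\ell}$, check the admissibility conditions $0 < \delta \leq \rho$ and $\theta \geq 1$, and then read off and simplify the two resulting bounds.

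First I would treat the growth hypothesis (i). From definition (\ref{3.Q_n}), the polynomial $Q_n(T) = T^{kp^n-1}(T^{\ell p^n}-1)$ has degree $(k+\ell)p^n - 1$, so $|Q_n| = |T|^{(k+\ell)p^n-1}$. The inequality $|Q_n| < |Q_{n+1}|$ is immediate, and a one-line computation gives $|Q_{n+1}| = |T|^{p-1}|Q_n|^p$; hence (i) holds with $\theta = p$ and $c_0 = |T|^{p-1}$, in fact with equality on the right.

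Next, hypothesis (ii) is already available. The upper estimate is exactly Inequality (\ref{frarat}), once one notices that its exponent can be rewritten as $\frac{k+\omega\ell}{k+\ell} = 1 + \frac{(\omega-1)\ell}{k+\ell} = 1+\delta$; and the lower estimate is precisely the content of the lemma immediately preceding the theorem, which furnishes $|f_{\a}-P_n/Q_n| \geq c_1|Q_n|^{-p^s}$, i.e.\ the bound with $1+\rho = p^s$. The admissibility conditions then follow since $\omega = 1+1/\ell$ forces $\delta = 1/(k+\ell) \leq 1 \leq p^s - 1 = \rho$ and $\theta = p \geq 2$.

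Finally I would invoke Lemma \ref{approximation_lemma}: its lower bound $1+\delta$ simplifies to $\frac{k+\omega\ell}{k+\ell}$, while its upper bound $\frac{\theta(1+\rho)}{\delta}$ becomes $\frac{p\cdot p^s(k+\ell)}{(\omega-1)\ell} = \frac{p^{s+1}(k+\ell)}{(\omega-1)\ell}$, which is exactly (\ref{muin12}). The only step requiring genuine care — the nearest thing to an obstacle in an otherwise mechanical argument — is confirming that the growth exponent in (i) is exactly $\theta = p$ and not something larger. This is what keeps the final upper bound proportional to $p^{s+1}$, and it hinges on $\deg Q_n = (k+\ell)p^n - 1$ being an affine function of $p^n$.
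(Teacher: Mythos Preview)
Your proposal is correct and follows essentially the same route as the paper: the paper simply asserts that the construction of $(P_n/Q_n)$ satisfies the hypotheses of Lemma~\ref{approximation_lemma} with $\theta=p$, $\rho=p^s-1$ and $\delta=\frac{(\omega-1)\ell}{k+\ell}$, and then reads off~(\ref{muin12}). You have merely made explicit the verification of hypothesis~(i) and of the admissibility conditions $0<\delta\leq\rho$, which the paper leaves to the reader; note incidentally that $\delta<\rho$ can also be obtained directly from Lemma~\ref{3.ker} (it is equivalent to $\frac{k+\omega\ell}{k+\ell}<p^s$) without invoking the specific value $\omega=1+1/\ell$.
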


\begin{rem}\label{3.sameprefix} 
If, for every $n$, $\a$ and $\textbf{c}_n$ have the same first $(k+\omega \ell) p^n$ digits, while the $(k+\omega \ell) p^n+1$-th digits are different, then we have 
$$\left|f_{\a}-\frac{P_n}{Q_n}\right|=\frac{c_2}{\vert Q_n \vert^{1+\delta}}.$$
Thus, Inequality (\ref{muin12}) does not depend on $s$ (the cardinality of $p$-kernel) anymore. More precisely, in this case, $P_n$ and $Q_n$ satisfy Lemma \ref{approximation_lemma} with $\theta=p$, $\rho=\delta=\frac{(\omega-1)\ell}{k+\ell}$ and we have
$$\frac{k+\omega\ell}{k+\ell} \leq \mu(f_{\a}) \leq \frac{p(k+\omega\ell)}{(\omega-1)\ell}.$$
Moreover, if there exists $N$ such that, for any $n\geq N$, $(P_n, Q_n)=1$, then
$$\frac{k+\omega\ell}{k+\ell} \leq \mu(f_{\a}) \leq \mathrm{max}\left (\frac{k+\omega\ell}{k+\ell}, 1+\frac{p(k+\ell)}{(\omega-1)\ell}\right).$$
If $U=\varepsilon$, that is $k=0$, then
 $$\omega \leq \mu(f_{\a}) \leq p\frac{\omega}{\omega-1}.$$ Furthermore, if $\omega-1\geq \sqrt p$ and $(P_n, Q_n)=1$, then $\mu(f_{\a})=\omega$. All this explains why, in many cases, the general upper bound 
 we obtained in Theorem \ref{MUIN11} can be significatively improved. 
\end{rem}

\begin{proof}[Proof of Theorem \ref{MUIN11}] 
By construction, $\omega=1+1/{\ell}$ and $k+\ell \leq d$. By Theorem \ref{muin112}, it immediately follows that $\mu(f_{\a})\leq p^{s+1}e$.
\end{proof}
\end{subsection}

\begin{subsection}{An equivalent condition for coprimality of $P_n$ and $Q_n$}\label{3.Equivalentcondition}

We have seen in Remark \ref{3.sameprefix} that, in the case where the numerator $P_n$ 
and the denominator $Q_n$ of our rational approximations are relatively prime, the 
bound for the irrationality exponent obtained in Theorem \ref{MUIN11} can be significantly improved. 
This serves as a motivation for this section,  which is devoted to the coprimality of 
$P_n$ and $Q_n$. 

First, let us recall the following result, which is an easy consequence of the fact that the greatest common divisor of two polynomials, defined over a field $\K$, also belongs to $\K$.

\begin{lemma}
 Let $P, Q \in \F_q[T]$. Then $(P, Q)=1$ over $\F_q[T]$ if and only if $(P, Q)=1$ over $\overline{\F}_p[T]$. 
\end{lemma}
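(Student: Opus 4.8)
The plan is to prove the statement by showing that the greatest common divisor of two polynomials is insensitive to field extension, so that coprimality over $\F_q[T]$ is equivalent to coprimality over $\overline{\F}_p[T]$. The key fact I would invoke is that for polynomials over a field, the gcd can be computed by the Euclidean algorithm, and this computation takes place entirely within the coefficient field. Concretely, I first recall that $\F_q[T]$ is a Euclidean domain, hence a principal ideal domain, so $(P,Q)=1$ over $\F_q[T]$ means there exist polynomials $A, B \in \F_q[T]$ with $AP + BQ = 1$ (B\'ezout's identity). Since $\F_q[T] \subseteq \overline{\F}_p[T]$, this same identity witnesses coprimality over $\overline{\F}_p[T]$, giving one implication immediately and for free.

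For the reverse implication, I would argue as follows. Suppose $P$ and $Q$ are not coprime over $\F_q[T]$; then their gcd $D$ over $\F_q[T]$ is a nonconstant polynomial, and $D$ divides both $P$ and $Q$ in $\F_q[T]$, hence also in the larger ring $\overline{\F}_p[T]$. Therefore $P$ and $Q$ share the nonconstant common factor $D$ over $\overline{\F}_p[T]$, so they are not coprime there either. Taking the contrapositive yields that coprimality over $\overline{\F}_p[T]$ forces coprimality over $\F_q[T]$. This reasoning relies only on the divisibility relation being preserved under ring inclusion, which is immediate.

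An equivalent and perhaps cleaner packaging, which the excerpt's hint suggests, is to observe directly that the gcd $D$ of $P$ and $Q$ computed over $\F_q[T]$ remains a gcd over $\overline{\F}_p[T]$. Indeed, the Euclidean algorithm applied to $P, Q \in \F_q[T]$ produces a sequence of remainders all lying in $\F_q[T]$, terminating in $D \in \F_q[T]$; running the very same algorithm over $\overline{\F}_p[T]$ produces the identical sequence of remainders, since division with remainder is unique and the intermediate quotients and remainders are determined by the coefficients, which do not leave $\F_q$. Hence the gcd is the same polynomial (up to the usual normalization by a unit) in both rings, and in particular it is a constant in one ring if and only if it is a constant in the other. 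Coprimality is exactly the condition that this gcd be a nonzero constant, which establishes the equivalence.

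I do not anticipate a genuine obstacle here, as this is a standard and elementary fact; the only point requiring a modicum of care is making precise that the Euclidean division process does not depend on the ambient field, i.e.\ that the quotient and remainder of $A$ by $B$ over $\F_q[T]$ coincide with those computed over $\overline{\F}_p[T]$. This follows from the uniqueness of Euclidean division in a polynomial ring over a field together with the fact that the division of $A$ by $B$ over $\F_q[T]$ already satisfies, over $\overline{\F}_p[T]$, the defining degree condition on the remainder. Thus the same $D$ serves as gcd in both settings, completing the argument.
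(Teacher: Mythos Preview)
Your proof is correct and follows essentially the same approach as the paper, which simply observes (without further detail) that the lemma is an easy consequence of the fact that the greatest common divisor of two polynomials over a field $\K$ already lies in $\K[T]$. Your Euclidean-algorithm justification spells out precisely why this fact holds, so your argument is a fuller version of what the paper merely asserts.
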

We recall that $\overline{\F}_p$ is the classical notation for an algebraic closure of $\F_p$.

Let $n\in \N^*$ and $Q_n(T)=T^{kp^n-1}(T^{\ell p^n}-1) \in \F_q[T]$. Since we work in characteristic $p$, we have that 
$$Q_n(T)=T^{kp^n-1}(T^{\ell}-1)^{p^n}.$$
Now, let $P$ be an arbitrary polynomial with coefficients in $\F_q$. Then $(P, Q_n)=1$ if and only if
$(P(T), T)=1$ and $(P(T), T^{\ell}-1)=1$. In other words, $(P, Q_n)=1$, if and only if $P(0)\neq 0$ and $P(a)\neq 0$ for all $a\in \overline{\F}_p$ such that $a^{\ell}=1$.


 Therefore, we easily obtain the following lemma, which will simplify the study of the coprimality of polynomials $P_n$ and $Q_n$, by using some properties of $P_{ {U_n}}$ and $P_{ {V_n}}$. (We recall that $U_n=\varphi(\sigma^n(U))$ and $V_n=\varphi(\sigma^n(V))$, where $U$ and $V$ are introduced in Section \ref{3.construction}.)
\begin{lemma}\label{3.(P_n, Q_n)} Let $n\in \N^*$ and $P_n$, $Q_n$ defined in Eq. (\ref{3.P_n}) and (\ref{3.Q_n}). Then, $(P_n, Q_n)=1$ over $\F_q(T)$ if and only if we have 
\begin{itemize}
\item [(i)] $P_{ {U_n}}(0)\neq P_{ {V_n}}(0)$, 
 \item [(ii)] for any $a\in \overline{\F}_p$, such that $a^{\ell}=1$, $P_{ {V_n}}(a)\neq 0$.
\end{itemize}
\end{lemma}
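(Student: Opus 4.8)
The plan is to reduce the coprimality of $P_n$ and $Q_n$ entirely to a finite list of point evaluations, exploiting the factorization of $Q_n$ recorded just above the statement. Since we work in characteristic $p$, we have $Q_n(T)=T^{kp^n-1}(T^{\ell}-1)^{p^n}$, so the roots of $Q_n$ in $\overline{\F}_p$ are exactly $0$ together with the elements $a$ satisfying $a^{\ell}=1$ (the multiplicities being irrelevant for the gcd). By the preceding lemma, $(P_n,Q_n)=1$ over $\F_q[T]$ is equivalent to the same over $\overline{\F}_p[T]$, and by the observation immediately before the statement this holds if and only if $P_n$ vanishes at none of these roots, i.e. $P_n(0)\neq 0$ and $P_n(a)\neq 0$ for every $a\in\overline{\F}_p$ with $a^{\ell}=1$. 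It therefore suffices to translate each of these two non-vanishing conditions into the stated conditions (i) and (ii) on $P_{U_n}$ and $P_{V_n}$, using the explicit shape of $P_n$ from Eq. (\ref{3.P_n}).

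Recall that $P_n(T)=P_{U_n}(T)(T^{\ell p^n}-1)+P_{V_n}(T)$. First I would evaluate at $T=0$: the factor $T^{\ell p^n}-1$ takes the value $-1$ there, so $P_n(0)=-P_{U_n}(0)+P_{V_n}(0)$, and hence $P_n(0)\neq 0$ is precisely condition (i), namely $P_{U_n}(0)\neq P_{V_n}(0)$. Next I would evaluate at an arbitrary $a\in\overline{\F}_p$ with $a^{\ell}=1$. The key point is that $a^{\ell p^n}=(a^{\ell})^{p^n}=1$, so the factor $T^{\ell p^n}-1$ vanishes at $a$; consequently $P_n(a)=P_{V_n}(a)$, and the requirement $P_n(a)\neq 0$ becomes exactly condition (ii). Combining the $T=0$ evaluation with the evaluations at the $\ell$th roots of unity yields the desired equivalence in both directions at once, since each individual step is itself an ``if and only if''.

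There is essentially no hard step here: the argument is a direct computation once the factorization of $Q_n$ and the evaluation criterion are in hand. The only point requiring a little care --- and the one I would make explicit --- is that every $\ell$th root of unity is automatically an $\ell p^n$th root of unity in characteristic $p$, which is what makes the $P_{U_n}$-term disappear in condition (ii); this is where the Frobenius identity $T^{\ell p^n}-1=(T^{\ell}-1)^{p^n}$ does all the work. I would also remark that the set $\{a:a^{\ell}=1\}$ coincides with the set of \emph{distinct} roots of $T^{\ell}-1$ even when $p\mid\ell$, so that conditions (i)--(ii) genuinely account for all the roots of $Q_n$ and nothing is lost by ignoring multiplicities.
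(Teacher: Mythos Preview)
Your argument is correct and is exactly the approach the paper takes: it records the factorization $Q_n(T)=T^{kp^n-1}(T^{\ell}-1)^{p^n}$, reduces coprimality to the non-vanishing of $P_n$ at $0$ and at the $\ell$th roots of unity, and then (implicitly) leaves the two evaluations of $P_n$ at these points to the reader. You have simply spelled out those evaluations from Eq.~(\ref{3.P_n}), which is all that is needed.
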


\end{subsection}
\end{section}

\begin{section}{Matrices associated with morphisms}\label{Matrix associated with morphisms}

The purpose of this section is to give an approach which will allow one to compute the polynomials $P_{ {U_n}}(T)$ and $P_{ {V_n}}(T)$, described in the previous section. In particular, we show that, if $\alpha \in \overline{\F}_p$, the sequences $(P_{ {U_n}}(\alpha))_{n\geq 1}$ and $(P_{ {V_n}}(\alpha))_{n\geq 1}$ are ultimately periodic. Lemma \ref{3.(P_n, Q_n)} implies that we have to test the coprimality of $P_n$ and $Q_n$ only for a finite number of index $n$.\\

Let $U= a_0a_{1}\cdots a_{k-1}$ be a finite word on $\A_m$ and let $i\in \A_m$. We let $\P_{ U}(i)$ denote the set of positions of $i$ in the word $\overline U$; or, simply, $\P_i$ if there is no doubt about $U$.
\begin{defi}\label{v_U} We associate with $U$ the row vector $v_U(T)=(\beta_{U, j}(T))_{0\leq j \leq m-1}$ with coefficients in $\F_p[T]$ where, for any $j \in\A_m$, $\beta_{U, j}$ is defined as:
\begin{equation}\label{3.definit}\beta_{U, j}(T)= \begin{cases} \sum_{l\in \P_{j}}T^l, \text{if $j$ occurs in $U$}, \\
0, \text{ otherwise}.
\end{cases}\end{equation}

\end{defi}

\begin{ex} We consider $U=1020310 \in \A_5^*$. Then $\P_0=\{0, 3, 5\}$, $\P_1=\{1, 6\}$, $\P_2=\{4\}$, $\P_3=\{2\}$ and $\P_4=\emptyset$. The vector associated with $U$ is $$v_U(T)=(1+T^3+T^5, T+T^6, T^4, T^2, 0).$$
We also recall that $P_U(T)=T^6+2T^4+3T^2+T$ (see Definition \ref{T.P_U}) and we observe that \begin{displaymath}P_U(T)=v_U(T)\left( \begin{array}{c} 0\\1\\2\\3\\ 4 \end{array} \right) .\end{displaymath}
\end{ex}

\begin{defi}\label{3.matrixmorphism} Let $\sigma: \A_m \mapsto \A_m^*$ be a morphism. We associate with $\sigma$ the $m \times m$ matrix $M_{\sigma}(T)$ with coefficients in $\F_p[T]$ defined by
\begin{displaymath}
 M_{\sigma}(T)= 
 (\beta_{\sigma(i), j}(T))_{ 0 \leq i, j \leq m-1}.
\end{displaymath}
\end{defi}

\begin{ex}
 Let $\sigma: \A_3 \mapsto \A_3^*$, $\sigma(0) = 010, \sigma(1) = 2101$ and $\sigma(2) =00211$.
Then 
\begin{displaymath}
 M_{\sigma}(T)= 
\left( \begin{array}{cccc}
T^2+1 & T& 0 \\
T & T^2+1 & T^3 \\
T^4+T^3 & T+1 & T^2 \\
\end{array} \right).
\end{displaymath}
\end{ex}

It is not difficult to see that such matrices satisfy some interesting general properties as claimed 
in the following remarks.


\begin{rem} The matrix $M_{\sigma}(1)$ is the reduction modulo $p$ of the so-called incidence matrix associated with the morphism $\sigma$. This matrix satisfy some very nice properties and has been the subject of considerable study (see for instance \cite{Queff-SDS}).
\end{rem}

\begin{rem} \label{composition} If $\sigma_1$ and $\sigma_2$ are two $p$-morphisms over $\A_m$ then we have that 
$$M_{\sigma_1 \circ \sigma_2}(T)=M_{\sigma_2}(T^p)M_{\sigma_1}(T).$$
\end{rem}

Now, our main goal is to prove that, if $\alpha \in \overline{\F}_p$, the sequences $(P_{ {U_n}}(\alpha))_{n\geq 1}$ and $(P_{ {V_n}}(\alpha))_{n\geq 1}$ are ultimately periodic. This will be the subject of Proposition \ref{3.ultperiod}. In order to prove this, we will need the following auxiliary results.

\begin{lemma}\label{P_U_n}
Let $\sigma : \A_m \mapsto \A_m^*$ be a $p$-morphism and $U=a_0\cdots a_{k-1} \in \A_m^*$. For any $n\in \N$ we denote $U_n=\sigma^n(U)=\sigma^n(a_{0})\cdots \sigma^n(a_{k-1})$.
We have $$P_{ {U_n}}(T)=v_U(T^{p^n}) R_n(T), $$ where, for any $n \in \N$, 
 \begin{displaymath}
R_n(T) =
\left( \begin{array}{c}
P_{\sigma^n(0)}(T) \\
P_{\sigma^n(1)}(T) \\
\vdots \\
P_{\sigma^n(m-1)}(T)
\end{array} \right).
\end{displaymath}

\end{lemma}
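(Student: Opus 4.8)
The plan is to reduce everything to a single \emph{concatenation identity} for the polynomials $P_W$ of Definition \ref{T.P_U}, and then to match that identity term by term against the vector product on the right-hand side.

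First I would establish, directly from Definition \ref{T.P_U}, that if $W=W^{(1)}W^{(2)}$ is a concatenation of two finite words then
$$P_W(T)=T^{|W^{(2)}|}P_{W^{(1)}}(T)+P_{W^{(2)}}(T).$$
Reading $W$ from its last letter (the constant term) toward its first, the suffix $W^{(2)}$ fills the lowest $|W^{(2)}|$ coefficients and reproduces $P_{W^{(2)}}$, while the prefix $W^{(1)}$ contributes its own polynomial shifted by the length of the suffix. Iterating this over the $k$ blocks of $U_n=\sigma^n(a_0)\sigma^n(a_1)\cdots\sigma^n(a_{k-1})$, and using that $\sigma$ is a $p$-morphism so that each block $\sigma^n(a_i)$ has length exactly $p^n$, yields
$$P_{U_n}(T)=\sum_{i=0}^{k-1}T^{(k-1-i)p^n}\,P_{\sigma^n(a_i)}(T).$$
This is the crucial place where $p$-uniformity enters: because every block has the same length $p^n$, the shift attached to the $i$-th block is a multiple of $p^n$, which is precisely what will let the substitution $T\mapsto T^{p^n}$ absorb these shifts.

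Next I would expand the right-hand side. By Definition \ref{v_U},
$$v_U(T^{p^n})R_n(T)=\sum_{j=0}^{m-1}\beta_{U,j}(T^{p^n})\,P_{\sigma^n(j)}(T)=\sum_{j=0}^{m-1}\Big(\sum_{l\in\P_U(j)}T^{lp^n}\Big)P_{\sigma^n(j)}(T),$$
where $\P_U(j)$ is the set of positions of the letter $j$ in the reversed word $\overline U$. Since $l\in\P_U(j)$ means precisely that the letter at position $l$ of $\overline U$ is $j$, i.e. $a_{k-1-l}=j$, I can reorganize the double sum by \emph{position} rather than by \emph{letter}: each admissible pair $(j,l)$ corresponds to a unique position $l\in\{0,\dots,k-1\}$ carrying the letter $a_{k-1-l}$. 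This rewrites the expression as $\sum_{l=0}^{k-1}T^{lp^n}P_{\sigma^n(a_{k-1-l})}(T)$, and the change of index $i=k-1-l$ makes it coincide term by term with the formula for $P_{U_n}(T)$ obtained above.

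The argument is essentially bookkeeping, and I expect the only real pitfall to be the reversal convention built into Definitions \ref{T.P_U} and \ref{v_U} (the passage to $\overline U$): one must track that a position $l$ in $\overline U$ corresponds to the letter $a_{k-1-l}$ of $U$, and that the block $\sigma^n(a_i)$ sits at offset $(k-1-i)p^n$ from the constant term of $P_{U_n}$. Once these two conventions are aligned the two sums match exactly. The edge case $U=\varepsilon$ (so $k=0$) is immediate, both sides being the empty sum, that is the zero polynomial.
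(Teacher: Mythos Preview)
Your proposal is correct and follows essentially the same route as the paper: both arguments first expand $P_{U_n}(T)$ via the block-concatenation identity as $\sum_{i=0}^{k-1}T^{(k-1-i)p^n}P_{\sigma^n(a_i)}(T)$, using $p$-uniformity to get the common shift $p^n$, and then identify this with $v_U(T^{p^n})R_n(T)$. The only cosmetic difference is in the matching step: the paper regroups the sum by letter into a vector $S_n(T)=S_0(T^{p^n})$ and recognizes $S_0=v_U$ by specializing to $n=0$, whereas you expand $v_U(T^{p^n})R_n(T)$ directly and reindex by position---your version is slightly more explicit but the content is the same.
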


\begin{proof}
 Since $U_n=\sigma^n(a_0)\cdots \sigma^n(a_{k-1})$, we infer from the fact that $\sigma$ is a $p$-morphism, 
 that
$$ P_{\sigma^n(U)}(T)= P_{\sigma^n(a_{0})}(T) T^{(k-1) \cdot p^n}+P_{\sigma^n(a_{1})}(T) T^{(k-2) \cdot p^n}+ \cdots +P_{\sigma^n(a_{k-1})}(T).$$
Hence there exists a vector $S_n(T)=(s_0(T^{p^n}), s_1(T^{p^n}), \ldots, s_{m-1}(T^{p^n}))$, where $s_i(T)$, $0\leq i \leq m-1$, are some polynomials with coefficients $0$ or $1$, such that
$$P_{\sigma^n(U)}(T)= S_n(T) \left( \begin{array}{c}
P_{\sigma^n(0)}(T) \\
P_{\sigma^n(1)}(T) \\
\vdots \\
P_{\sigma^n(m-1)}(T)
\end{array} \right)$$
and $S_n(T)=S_0(T^{p^n})$. For $n=0$, the equality above becomes 
$$P_{ U}(T)=S_0(T)\left(\begin{array}{c}
0 \\
1\\
\vdots \\
m-1
\end{array} \right).$$
By Definitions \ref{v_U} and \ref{T.P_U}, we deduce that $S_0(T)=v_U(T)$. This ends the proof.
\end{proof}

\begin{ex} Let $\sigma$ be a $3$-morphism over $\A_2$ and $U=10100$. Then, for any $n\in \N$, $$\sigma^n(U)=\sigma^n(1) \sigma^n(0) \sigma^n(1) \sigma^n(0) \sigma^n(0) \text{ and } v_U(T)=(T^3+T+1, T^4+T^2).$$
Hence, 
\begin{align*}
P_{\sigma^n(U)}(T)=& P_{\sigma^n(1)}(T) T^{4 \cdot 3^n}+P_{\sigma^n(0)}(T) T^{3 \cdot 3^n}+ \\
&+P_{\sigma^n(1)}(T) T^{2 \cdot 3^n}+P_{\sigma^n(0)}(T) T^{ 3^n}+P_{\sigma^n(0)}(T) \\
= &(T^{3 \cdot 3^n} +T^{3^n}+1, T^{4 \cdot 3^n}+T^{2 \cdot 3^n}) \left( \begin{array}{c}
P_{\sigma^n(0)}(T) \\
P_{\sigma^n(1)}(T) \\
\end{array} \right)\\
=&v_U(T^{3^n})R_n(T).
\end{align*}
\end{ex}

\begin{lemma}
 Let $n\in \N$ and let $\sigma$ be a $p$-morphism over $\A_m$. We have
\begin{equation}\nonumber
 R_{n+1}(T)=M_{\sigma}(T^{p^n}) R_{n}(T), 
\end{equation}
where $M_{\sigma}(T)$ is the matrix associated with $\sigma$, as in Definition \ref{3.matrixmorphism}.
\end{lemma}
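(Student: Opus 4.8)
The plan is to prove this recursion as an immediate componentwise consequence of Lemma \ref{P_U_n}, applied to each of the $m$ letter-images $\sigma(i)$. First I would unwind the definition of $R_{n+1}(T)$: its $i$-th entry is $P_{\sigma^{n+1}(i)}(T)$. Since $\sigma$ is a $p$-morphism we may write $\sigma^{n+1}(i)=\sigma^{n}(\sigma(i))$, so this entry equals $P_{\sigma^{n}(\sigma(i))}(T)$. The point is that $\sigma(i)$ is just a finite word over $\mathcal{A}_m$ (of length $p$), so Lemma \ref{P_U_n} applies to it verbatim with $U:=\sigma(i)$.

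Applying that lemma gives, for each $i\in\mathcal{A}_m$,
$$P_{\sigma^{n}(\sigma(i))}(T)=v_{\sigma(i)}(T^{p^n})\,R_n(T).$$
The key identification is then that the row vector $v_{\sigma(i)}(T)$ is exactly the $i$-th row of the matrix $M_\sigma(T)$: indeed, by Definition \ref{3.matrixmorphism} the $(i,j)$ entry of $M_\sigma(T)$ is $\beta_{\sigma(i),j}(T)$, which by Definition \ref{v_U} is precisely the $j$-th coordinate of $v_{\sigma(i)}(T)$. Substituting $T\mapsto T^{p^n}$ shows that the $i$-th row of $M_\sigma(T^{p^n})$ is $v_{\sigma(i)}(T^{p^n})$. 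Hence the $i$-th entry of $R_{n+1}(T)$ equals the $i$-th row of $M_\sigma(T^{p^n})$ multiplied by the column vector $R_n(T)$, i.e. the $i$-th entry of $M_\sigma(T^{p^n})R_n(T)$. As this holds for every $i$, the claimed matrix identity $R_{n+1}(T)=M_\sigma(T^{p^n})R_n(T)$ follows.

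There is essentially no serious obstacle here: the whole content has already been packaged into Lemma \ref{P_U_n}, and what remains is bookkeeping. The only two points requiring care are the clean identification of the row $v_{\sigma(i)}$ with a row of $M_\sigma$ (which is purely a matter of matching Definitions \ref{v_U} and \ref{3.matrixmorphism}), and keeping the argument substitution $T\mapsto T^{p^n}$ consistent throughout, so that it is $M_\sigma(T^{p^n})$ — and not $M_\sigma(T)$ — that appears. I would stress that the $p$-uniformity of $\sigma$ is what makes the exponents align correctly, since it is already built into the hypotheses of Lemma \ref{P_U_n}.
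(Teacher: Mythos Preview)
Your proof is correct and follows essentially the same route as the paper: both recognise that the $i$-th entry of $R_{n+1}(T)$ is $P_{\sigma^n(\sigma(i))}(T)$ and obtain the result by invoking Lemma \ref{P_U_n} with $U=\sigma(i)$, identifying the resulting row vector $v_{\sigma(i)}(T^{p^n})$ with the $i$-th row of $M_\sigma(T^{p^n})$. If anything, your version is slightly more streamlined, since the paper first writes out the explicit expansion $P_{\sigma^{n+1}(j)}(T)=P_{\sigma^n(a_0^{(j)})}(T)T^{(p-1)p^n}+\cdots$ before appealing to Lemma \ref{P_U_n}, whereas you apply the lemma directly.
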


\begin{proof}
Let $\sigma$ be defined as follows
\begin{equation*}
 \left\{
 \begin{array}{ccc}
  \sigma(0)&=&a_{0}^{(0)}a_{1}^{(0)}\cdots a_{p-1}^{(0)}\\
  \sigma(1)&=&a_{0}^{(1)}a_{1}^{(1)}\cdots a_{p-1}^{(1)}\\
  \vdots & \vdots & \vdots \\
  \sigma(m-1)&=&a_{0}^{(m-1)}a_{1}^{(m-1)}\cdots a_{p-1}^{(m-1)}, 
 \end{array}
 \right.
\end{equation*}
where $a_i^{(j)} \in \A_m$, for any $i\in \{ 0, 1 , \ldots, p-1\}$ and $j\in \{ 0, 1, \ldots, m-1\} $. Then, for any $j\in \{ 0, 1, \ldots, m-1\}$ and $n\in \N^*$, we have
$$ \sigma^{n+1}(j)=\sigma^n(\sigma(j))=\sigma^n(a_{0}^{(j)}a_{1}^{(j)}\cdots a_{p-1}^{(j)})=
\sigma^n(a_{0}^{(j)})\sigma^n(a_{1}^{(j)})\cdots \sigma^n(a_{p-1}^{(j)}).$$
Hence
$$P_{\sigma^{n+1}(j)}(T)=P_{\sigma^{n}(a_{0}^{(j)})}(T)T^{(p-1)p^n}+\cdots+ P_{\sigma^n(a_{p-1}^j)}(T).$$
It follows by Lemma \ref{P_U_n} that
\begin{displaymath}
\left( \begin{array}{c}
P_{\sigma^{n+1}(0)}(T) \\
P_{\sigma^{n+1}(1)}(T) \\
\vdots \\
P_{\sigma^{n+1}(m-1)}(T)
\end{array} \right)= (\beta_{\sigma(i), j}(T^{p^n}))_{0\leq i, j \leq m-1}
\left( \begin{array}{c}
P_{\sigma^n(0)}(T) \\
P_{\sigma^n(1)}(T) \\
\vdots \\
P_{\sigma^n(m-1)}(T)
\end{array} \right), 
\end{displaymath}
that is, $R_{n+1}(T)=M_{\sigma}(T^{p^n})R_n(T)$, which ends the proof.

\end{proof}

\begin{rem} \label{3.matrix-morphism}In particular, if $n=0$ in the previous lemma, we obtain the 
following equalities 
\begin{displaymath}
\left( \begin{array}{c}
P_{\sigma(0)}(T) \\
P_{\sigma(1)}(T) \\
\vdots \\
P_{\sigma(m-1)}(T)
\end{array} \right)= (\beta_{\sigma(i), j}(T))_{0\leq i, j \leq m-1}
\left( \begin{array}{c}
0 \\
1 \\
\vdots \\
m-1
\end{array} \right)=M_{\sigma}(T)R_0(T), 
\end{displaymath}
and this being true for any $p$-morphism $\sigma$ defined over $\A_m$.
Therefore, we observe that given a matrix $M$ of this form, we can find only one morphism whose matrix is $M$.

Notice also that, if $\varphi$ is a coding defined over $\A_m$, we have a similar identity
\begin{displaymath}
\left( \begin{array}{c}
P_{\varphi(\sigma(0))}(T) \\
P_{\varphi(\sigma(1))}(T) \\
\vdots \\
P_{\varphi(\sigma(m-1))}(T)
\end{array} \right)= (\beta_{\varphi(\sigma(i)), j}(T))_{0\leq i, j \leq m-1}
\left( \begin{array}{c}
\varphi(0) \\
\varphi(1) \\
\vdots \\
\varphi(m-1)
\end{array} \right), 
\end{displaymath}

\end{rem}

The following corollaries immediately yield. 

\begin{coro}\label{3.R_n}
 Let $n\in \N^*$ and let $\sigma$ be a $p$-morphism defined on $\A_m$. We have
\begin{equation}\nonumber
 R_{n}(T)=M_{\sigma}(T^{p^{n-1}}) M_{\sigma}(T^{p^{n-2}}) \cdots M_{\sigma}(T) \left( \begin{array}{c}
0 \\
1 \\
\vdots \\
m-1
\end{array} \right), 
\end{equation}
where $M_{\sigma}(T)$ is the matrix associated with $\sigma$, as in Definition \ref{3.matrixmorphism}.
\end{coro}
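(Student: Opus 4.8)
The plan is to prove Corollary \ref{3.R_n} by a straightforward induction on $n$, using the recurrence $R_{n+1}(T)=M_{\sigma}(T^{p^n})R_n(T)$ established in the immediately preceding lemma together with the base case $R_1(T)=M_{\sigma}(T)R_0(T)$ identified in Remark \ref{3.matrix-morphism}. First I would fix the notation: recall that $R_0(T)=(0, 1, \ldots, m-1)^{\mathrm{t}}$ by definition, since $\sigma^0(j)=j$ and hence $P_{\sigma^0(j)}(T)=P_j(T)=j$ is the constant polynomial for the single-letter word $j$ (here one reads off Definition \ref{T.P_U} with $k=1$).

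The induction itself is immediate. For the base case $n=1$, Remark \ref{3.matrix-morphism} gives exactly $R_1(T)=M_{\sigma}(T)R_0(T)$. For the inductive step, assuming the claimed product formula holds for some $n\geq 1$, I would apply the preceding lemma with exponent $n$ to write $R_{n+1}(T)=M_{\sigma}(T^{p^n})R_n(T)$, and then substitute the induction hypothesis for $R_n(T)$:
\begin{equation*}
R_{n+1}(T)=M_{\sigma}(T^{p^n})\,M_{\sigma}(T^{p^{n-1}})\cdots M_{\sigma}(T)\left(\begin{array}{c} 0 \\ 1 \\ \vdots \\ m-1 \end{array}\right),
\end{equation*}
which is precisely the asserted formula with $n$ replaced by $n+1$. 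This completes the induction.

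There is essentially no obstacle here, since the corollary is just the telescoped (unrolled) form of the one-step recurrence, and the statement itself says the result ``immediately yields.'' The only point requiring minimal care is the correct ordering of the matrix factors and the matching exponents: the recurrence introduces the factor $M_{\sigma}(T^{p^n})$ on the \emph{left} at each step, so iterating it produces the decreasing sequence of exponents $p^{n-1}, p^{n-2}, \ldots, p^0=1$ read left to right, exactly as written in the statement. I would simply remark that the non-commutativity of matrix multiplication makes this ordering essential, but that the bookkeeping is forced by the shape of the recurrence, so no further verification is needed.
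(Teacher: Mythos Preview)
Your proof is correct and follows exactly the route the paper intends: the corollary is stated without proof in the paper (it ``immediately yields'' from the preceding lemma and Remark \ref{3.matrix-morphism}), and your explicit induction simply unrolls the one-step recurrence $R_{n+1}(T)=M_{\sigma}(T^{p^n})R_n(T)$ down to $R_0$, which is precisely what is meant.
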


\begin{coro}\label{3.Msigman}
Let $\sigma$ be a $p$-morphism defined on $\A_m$. Then for any $n\in \N^*$ we have
\begin{equation}\nonumber
 M_{\sigma^n}(T)=M_{\sigma}(T^{p^{n-1}}) M_{\sigma}(T^{p^{n-2}}) \cdots M_{\sigma}(T)
\end{equation}
where $M_{\sigma}(T)$ is the matrix associated with $\sigma$, as in Definition \ref{3.matrixmorphism}.
\end{coro}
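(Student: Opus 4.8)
The plan is to proceed by induction on $n$, the essential ingredient being the composition formula of Remark~\ref{composition}. Writing $\Pi_n(T):=M_{\sigma}(T^{p^{n-1}})M_{\sigma}(T^{p^{n-2}})\cdots M_{\sigma}(T)$, the goal is to establish $M_{\sigma^n}(T)=\Pi_n(T)$ for every $n\in\N^*$. The base case $n=1$ is immediate, since $\Pi_1(T)=M_{\sigma}(T^{p^0})=M_{\sigma}(T)$ and $\sigma^1=\sigma$.

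For the inductive step I would write $\sigma^{n+1}=\sigma\circ\sigma^n$ and apply the composition formula of Remark~\ref{composition} with outer morphism $\sigma_1=\sigma$ and inner morphism $\sigma_2=\sigma^n$, obtaining $M_{\sigma^{n+1}}(T)=M_{\sigma^n}(T^p)M_{\sigma}(T)$. Invoking the induction hypothesis after the substitution $T\mapsto T^p$ yields $M_{\sigma^n}(T^p)=M_{\sigma}(T^{p^n})M_{\sigma}(T^{p^{n-1}})\cdots M_{\sigma}(T^p)$, and multiplying on the right by $M_{\sigma}(T)$ gives precisely $\Pi_{n+1}(T)$. This closes the induction.

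The one genuine subtlety, and the step I expect to require the most care, is that Remark~\ref{composition} is stated for a pair of $p$-morphisms, whereas $\sigma^n$ is a $p^n$-morphism. I would resolve this by observing that the derivation of Remark~\ref{composition} uses only the $p$-uniformity of the \emph{outer} morphism: in $\sigma(\sigma^n(i))$ every block $\sigma(a)$ has length exactly $p$, and it is this uniform block length alone that governs the powers of $T$ arising in the position arithmetic, the inner morphism entering only through the factor $M_{\sigma^n}(T^p)$. Hence the identity $M_{\sigma_1\circ\sigma_2}(T)=M_{\sigma_2}(T^p)M_{\sigma_1}(T)$ remains valid for $\sigma_1=\sigma$ a $p$-morphism and $\sigma_2=\sigma^n$ arbitrary; equivalently, one may record the generalized formula $M_{\sigma_1\circ\sigma_2}(T)=M_{\sigma_2}(T^{k_1})M_{\sigma_1}(T)$ whenever $\sigma_1$ is $k_1$-uniform, whose proof is verbatim that of Remark~\ref{composition}.

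Finally, I would caution against the tempting shortcut of reading off the statement directly from Corollary~\ref{3.R_n}. Combining that corollary with $R_n(T)=M_{\sigma^n}(T)R_0(T)$ (the identity of Remark~\ref{3.matrix-morphism}, which is valid for an arbitrary morphism, applied to $\sigma^n$) shows only that $M_{\sigma^n}(T)$ and $\Pi_n(T)$ agree on the single column vector $R_0(T)$ with entries $0,1,\ldots,m-1$. This does not force equality of the two matrices, since the map $A\mapsto A\,R_0(T)$ is not injective on morphism matrices: the polynomial $P_W$ forgets the leading zeros of $W$. It is exactly for this reason that the matrix-level induction through the composition formula is needed.
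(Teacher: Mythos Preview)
Your proof is correct and is essentially the argument the paper leaves implicit: in the paper the corollary is stated without proof (``the following corollaries immediately yield''), and the natural justification is precisely the induction through Remark~\ref{composition} that you carry out, together with the observation---which the paper records in its final remark on $p^s$-morphisms---that the composition formula only requires uniformity of the outer morphism. Your caution about the shortcut via Corollary~\ref{3.R_n} and $R_0(T)$ is well taken: equality after multiplication by $R_0(T)$ does not determine the matrix (already for $m=1$ the vector $R_0(T)=(0)$ kills everything), so the matrix-level recursion $M_{\sigma^{n+1}}(T)=M_{\sigma^n}(T^p)M_{\sigma}(T)$ really is needed.
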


\begin{coro}\label{3.a_F_p} Let $a\in \F_p$. Then for any $n\in \N$ 
we have $M_{\sigma^n}(a)=M_{\sigma}^n(a)$.
\end{coro}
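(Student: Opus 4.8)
The plan is to prove Corollary \ref{3.a_F_p} as a direct specialization of Corollary \ref{3.Msigman}. Recall that Corollary \ref{3.Msigman} gives, for any $n\in\N^*$,
\begin{equation}\nonumber
M_{\sigma^n}(T)=M_{\sigma}(T^{p^{n-1}})M_{\sigma}(T^{p^{n-2}})\cdots M_{\sigma}(T).
\end{equation}
First I would observe that the entries of $M_\sigma(T)$ are polynomials in $\F_p[T]$, so that substituting a scalar $a\in\F_p$ for the indeterminate $T$ is a ring homomorphism $\F_p[T]\to\F_p$, and hence commutes with the matrix product appearing on the right-hand side above. Evaluating both sides at $T=a$ therefore yields
\begin{equation}\nonumber
M_{\sigma^n}(a)=M_{\sigma}(a^{p^{n-1}})M_{\sigma}(a^{p^{n-2}})\cdots M_{\sigma}(a).
\end{equation}

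The crucial point is the special behaviour of exponentiation when the base lies in $\F_p$ itself. By Fermat's little theorem, every element $a\in\F_p$ satisfies $a^p=a$; iterating this identity gives $a^{p^j}=a$ for every $j\geq 0$. Consequently each factor $M_{\sigma}(a^{p^j})$ in the product collapses to the single matrix $M_{\sigma}(a)$, independently of the exponent $j$. Substituting this into the displayed product and counting the $n$ factors gives immediately
\begin{equation}\nonumber
M_{\sigma^n}(a)=\underbrace{M_{\sigma}(a)\cdots M_{\sigma}(a)}_{n}=M_{\sigma}^{\,n}(a),
\end{equation}
which is the desired equality. The case $n=0$ is trivial since both sides equal the appropriate identity-type matrix (the matrix of the identity morphism), so the statement holds for all $n\in\N$.

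There is essentially no obstacle here: the entire content is the reduction $a^{p^j}=a$ for $a\in\F_p$, which is exactly why the corollary is stated for $a$ in the prime field $\F_p$ rather than in the algebraic closure $\overline{\F}_p$. I would emphasize in the write-up that this is precisely the step that fails for general $\alpha\in\overline{\F}_p$: there the exponents $p^{n-1},\ldots,p,1$ genuinely matter, the factors $M_\sigma(\alpha^{p^j})$ are distinct in general, and one obtains the ordered product of Corollary \ref{3.Msigman} rather than an honest $n$-th power. This distinction is the reason the sequences $(P_{U_n}(\alpha))_{n\geq 1}$ and $(P_{V_n}(\alpha))_{n\geq 1}$ are merely \emph{ultimately periodic} (the eventual periodicity following from the Frobenius orbit of $\alpha$ being finite) rather than governed by a single power of a fixed matrix, and the special case $a\in\F_p$ treated here is the cleanest instance of that mechanism.
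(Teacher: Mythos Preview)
Your proof is correct and is exactly the argument the paper has in mind: the corollary is stated immediately after Corollary~\ref{3.Msigman} with the phrase ``immediately yield,'' and the only content is evaluating that product formula at $T=a\in\F_p$ and using $a^{p^j}=a$. Your additional commentary on why the restriction $a\in\F_p$ is essential (and how the general case over $\overline{\F}_p$ leads instead to Proposition~\ref{M_kr} and ultimate periodicity) is accurate and helpful context.
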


%
%

\begin{prop} \label{M_kr}Let $p$ be a prime, $q$ a power of $p$ and $\sigma: \A_m \mapsto \A_m^*$ be a $p$-morphism. 
Let $\alpha \in \F_{r}$, where $r=p^t$, $t\in \N^*$. Then for any positive integer $k$ we have 

\begin{equation}\label{M_sigma_kr} M_{\sigma^{kt}}(\alpha)=(M_{\sigma^t}(\alpha))^k.
\end{equation}

\end{prop}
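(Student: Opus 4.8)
The plan is to derive Proposition \ref{M_kr} directly from Corollary \ref{3.Msigman}, which expresses $M_{\sigma^n}(T)$ as an ordered product of evaluations of $M_\sigma$ at successive $p$-power substitutions of the variable. The essential observation is that the Frobenius endomorphism $x \mapsto x^p$ fixes $\F_r$ pointwise with period dividing $t$: for $\alpha \in \F_r = \F_{p^t}$ we have $\alpha^{p^t} = \alpha$, hence $\alpha^{p^{i+t}} = \alpha^{p^i}$ for every $i \geq 0$. Thus the substitution parameters $T = \alpha^{p^j}$ appearing in the product from Corollary \ref{3.Msigman} cycle with period $t$, and this periodicity is exactly what will let me fold an $n = kt$-fold product into a $k$-th power.

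First I would apply Corollary \ref{3.Msigman} with $n = kt$ to write
\begin{equation}\nonumber
M_{\sigma^{kt}}(\alpha) = M_{\sigma}(\alpha^{p^{kt-1}}) M_{\sigma}(\alpha^{p^{kt-2}}) \cdots M_{\sigma}(\alpha^{p}) M_{\sigma}(\alpha).
\end{equation}
Next I would group the $kt$ factors into $k$ consecutive blocks of $t$ factors each, the block indexed by $j$ (for $0 \leq j \leq k-1$) consisting of the factors $M_{\sigma}(\alpha^{p^{jt+t-1}}) \cdots M_{\sigma}(\alpha^{p^{jt}})$. Using $\alpha^{p^{jt+i}} = \alpha^{p^i}$ for each $0 \leq i \leq t-1$ (which is the periodicity noted above, since $jt$ is a multiple of $t$), every block equals the same product $M_{\sigma}(\alpha^{p^{t-1}}) \cdots M_{\sigma}(\alpha)$. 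By Corollary \ref{3.Msigman} applied with $n = t$, this common block is precisely $M_{\sigma^t}(\alpha)$. Since the $k$ identical blocks are multiplied in order, the whole product collapses to $(M_{\sigma^t}(\alpha))^k$, which is the claimed identity \eqref{M_sigma_kr}.

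The only point demanding a little care is the bookkeeping of indices to confirm that the blocks appear in an order consistent with matrix multiplication being noncommutative: I must check that regrouping the factors $M_{\sigma}(\alpha^{p^{kt-1}}) \cdots M_{\sigma}(\alpha)$ into blocks respects associativity (it does, since I am only inserting parentheses, not reordering factors) and that each block, read left to right with decreasing exponent, matches the decreasing-exponent product that Corollary \ref{3.Msigman} gives for $M_{\sigma^t}(\alpha)$. I expect this indexing verification to be the main—indeed the only—obstacle, and it is routine: once the substitution $\alpha^{p^{jt+i}} = \alpha^{p^i}$ is in hand, the identity of the blocks and their ordered product yield the result immediately. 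Note that Corollary \ref{3.a_F_p} is the special case $t = 1$ of this proposition, which serves as a useful sanity check on the argument.
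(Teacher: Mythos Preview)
Your proposal is correct and takes essentially the same approach as the paper: both arguments rest on Corollary \ref{3.Msigman} together with the Frobenius periodicity $\alpha^{p^{i+t}} = \alpha^{p^i}$ for $\alpha \in \F_{p^t}$. The paper packages this as an induction on $k$ (peeling off one block of $t$ factors at a time), whereas you group all $k$ blocks at once, but this is a purely cosmetic difference.
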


\begin{proof}We argue by induction on $k$. Obviously, this is true for $k=1$. We suppose that Eq. (\ref{M_sigma_kr}) is satisfied for $k$ and we prove it for $k+1$. Using Corollary \ref{3.Msigman} 
and the fact that $\alpha^{r}=\alpha$ we obtain that
\begin{align*}
M_{\sigma^{(k+1)t}}(\alpha)&=\underbrace{M_{\sigma}(\alpha^{p^{kt+t-1}})\cdots M_{\sigma}(\alpha^{p^t})} M_{\sigma}(\alpha^{p^{t-1}})\cdots M_{\sigma}(\alpha)\\
&= \underbrace{M_{\sigma}(\alpha^{p^{kt-1}})\cdots M_{\sigma}(\alpha)}\underbrace{ M_{\sigma}(\alpha^{p^{t-1}})\cdots M_{\sigma}(\alpha)}\\
&=M_{\sigma^{kt}}(\alpha)M_{\sigma^t}(\alpha) =(M_{\sigma^{t}}(\alpha))^{k+1}.
\end{align*}
\end{proof}

\begin{prop}\label{3.ultperiod} Let $p$ be a prime, $q$ a power of $p$ and $U=a_{k-1}\cdots a_0 \in \A_m^*$. Let $\sigma: \A_m \mapsto \A_m^*$ be a $p$-morphism and $\varphi : \A_m \mapsto \F_q$ a coding. Let $\alpha \in \F_{r}$, where $r=p^t $, $r\in \N^*$. Then the sequence $(P_{\varphi(\sigma^n(U))}(\alpha))_{n\geq 0}$ is ultimately periodic.
\end{prop}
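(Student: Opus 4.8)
The plan is to reduce the statement about the polynomial sequence $(P_{\varphi(\sigma^n(U))}(\alpha))_{n\geq 0}$ to a statement about iterates of a single matrix, and then exploit the finiteness of a monoid generated by such a matrix over a finite field. First I would use the coding analog of Lemma \ref{P_U_n} together with Remark \ref{3.matrix-morphism}: just as $P_{U_n}(T) = v_U(T^{p^n}) R_n(T)$, evaluating at $\alpha$ gives $P_{\varphi(\sigma^n(U))}(\alpha) = v_U(\alpha^{p^n}) R_n^{\varphi}(\alpha)$, where $R_n^{\varphi}$ is the column vector with entries $P_{\varphi(\sigma^n(i))}(\alpha)$, $0\le i \le m-1$. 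By the same argument used to prove the recursion for $R_n$, the coding version satisfies $R_{n+1}^{\varphi}(\alpha) = M_\sigma(\alpha^{p^n}) R_n^{\varphi}(\alpha)$, and iterating gives (as in Corollary \ref{3.R_n}) a product of matrices $M_\sigma(\alpha^{p^{n-1}})\cdots M_\sigma(\alpha)$ applied to the initial coding vector $(\varphi(0),\ldots,\varphi(m-1))^{\mathsf T}$.

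Next I would bring in the hypothesis $\alpha \in \F_r$, $r = p^t$, which forces $\alpha^{p^t} = \alpha$; hence the exponents $\alpha^{p^n}$ are periodic in $n$ with period dividing $t$. The key algebraic input is Proposition \ref{M_kr}, which tells us that $M_{\sigma^{kt}}(\alpha) = (M_{\sigma^t}(\alpha))^k$. So if I decompose $n = kt + j$ with $0 \le j < t$, then using Corollary \ref{3.Msigman} I can write the governing product as $M_{\sigma^j}(\alpha^{p^{kt}}) \cdot M_{\sigma^{kt}}(\alpha)$; since $\alpha^{p^{kt}} = \alpha$, the first factor depends only on the residue $j$ and the second factor is the $k$th power of the fixed matrix $A := M_{\sigma^t}(\alpha)$. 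Consequently, along each residue class $j \bmod t$, the vector $R_n^{\varphi}(\alpha)$ is obtained by applying a fixed matrix to $A^k$ times a fixed vector.

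The finiteness then follows from the crucial observation that all matrices involved have entries in the finite field $\F_q$ (or $\F_r$): the entries of $M_\sigma$ are polynomials in $\F_p[T]$ evaluated at $\alpha \in \F_r$, so $A = M_{\sigma^t}(\alpha)$ lies in the finite ring $\mathrm{Mat}_m(\F_r)$. Therefore the powers $\{A^k : k \ge 0\}$ form a finite set, so the sequence $(A^k)_{k\ge 0}$ is itself ultimately periodic in $k$. Pushing this through the fixed linear maps for each residue $j$, the sequence $(R_n^{\varphi}(\alpha))_{n\ge 0}$ is ultimately periodic, and since $v_U(\alpha^{p^n})$ is also periodic in $n$ (again because $\alpha^{p^t}=\alpha$), the scalar sequence $P_{\varphi(\sigma^n(U))}(\alpha) = v_U(\alpha^{p^n}) R_n^{\varphi}(\alpha)$ is ultimately periodic as a product of two ultimately periodic sequences taking values in a finite set.

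The main obstacle, and the step requiring genuine care, is bookkeeping the exponent $\alpha^{p^n}$ correctly through the noncommutative matrix product and matching it with Proposition \ref{M_kr}: one must verify that the split $n = kt + j$ genuinely separates into a $j$-dependent prefix evaluated at $\alpha^{p^{kt}} = \alpha$ and a clean $k$th power, rather than leaving residual $k$-dependence inside the prefix factor. Once this reduction to $A^k$ is established cleanly, ultimate periodicity is immediate from the pigeonhole principle applied to the finite set $\mathrm{Mat}_m(\F_r)$; there is no delicate estimate, only the combinatorial finiteness of a cyclic semigroup in a finite monoid.
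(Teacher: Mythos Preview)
Your proposal is correct and follows essentially the same route as the paper: both arguments write $P_{\varphi(\sigma^n(U))}(\alpha)=v_U(\alpha^{p^n})\,M_{\sigma^n}(\alpha)\,(\varphi(0),\ldots,\varphi(m-1))^{\mathsf T}$, use $\alpha^{p^t}=\alpha$ together with Corollary~\ref{3.Msigman} and Proposition~\ref{M_kr} to factor $M_{\sigma^{n}}(\alpha)$ into a residue-class prefix times a power of $A=M_{\sigma^t}(\alpha)$, and then invoke pigeonhole on $\{A^k\}\subset\mathrm{Mat}_m(\F_r)$. The only cosmetic difference is that the paper phrases the split as $M_{\sigma^{n+kt}}(\alpha)=M_{\sigma^n}(\alpha)\,A^k$ rather than via $n=kt+j$, and is slightly less explicit than you are about why the prefix factor loses its $k$-dependence.
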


\begin{proof} First, notice that, as in Lemma \ref{P_U_n}, we have 
$$P_{\varphi(\sigma^n(U))}(T)=v_U(T^{p^n})
 \left( \begin{array}{c}
P_{\varphi(\sigma^n(0))}(T) \\
P_{\varphi(\sigma^n(1))}(T) \\
\vdots \\
P_{\varphi(\sigma^n(m-1))}(T)
\end{array} \right).$$

By Remark \ref{3.matrix-morphism}, we have that
\begin{displaymath}
\left( \begin{array}{c}
P_{\varphi(\sigma^n(0))}(T) \\
P_{\varphi(\sigma^n(1))}(T) \\
\vdots \\
P_{\varphi(\sigma^n(m-1))}(T)
\end{array} \right)=
M_{\sigma^n}(T) \left( \begin{array}{c}
\varphi(0) \\
\varphi(1) \\
\vdots \\
\varphi(m-1)
\end{array} \right).
\end{displaymath}

Hence
$$P_{\varphi(\sigma^n(U)}(\alpha)=v_U(\alpha^{p^n})M_{\sigma^n}(\alpha) \left( \begin{array}{c}
\varphi(0) \\
\varphi(1) \\
\vdots \\
\varphi(m-1)
\end{array} \right).$$

Clearly, the sequence $(v_{U}(\alpha^{p^n}))_{n\geq 0}$ is periodic with a period less than or equal to $t$ since $v_U(\alpha^{p^{n+t}})=v_U(\alpha^{p^n})$, for any $n\in \N^*$.
We now prove that the sequence $(M_{\sigma^n}(\alpha))_{n\geq 0}$ is ultimately periodic. \\
Since $\alpha \in \F_{p^t}$, we have $\alpha^{p^t}=\alpha$ and thus, by Corollary \ref{3.Msigman}, for any $k$ and $n\in \N$, we obtain 
$$M_{\sigma^{n+kt}}(\alpha)=M_{\sigma^n}(\alpha)M_{\sigma^{kt}}(\alpha).$$
Therefore, by Proposition \ref{M_kr} we have, for any $k\in \N$, 
$$M_{\sigma^{n+kt}}(\alpha)=M_{\sigma^n}(\alpha)M_{\sigma^{kt}}(\alpha)=M_{\sigma^n}(\alpha) (M_{\sigma^t}(\alpha))^k.$$

Since $M_{\sigma^t}(\alpha)$ is a $m\times m$ matrix with coefficients in a finite field, there exist two positive integers $m_0$ and $n_0$, $m_0\neq n_0$ (suppose 
that $m_0 < n_0$) such that $M_{\sigma^t}(\alpha)^{m_0}=M_{\sigma^t}(\alpha)^{n_0}$. This implies that
$$M_{\sigma^{n+m_0t}}(\alpha)=M_{\sigma^n}(\alpha)(M_{\sigma^{t}}(\alpha))^{m_0}=M_{\sigma^n}(\alpha) (M_{\sigma^{t}}(\alpha))^{n_0}=M_{\sigma^{n+n_0t}}(\alpha)$$ and thus, the sequence $(M_{\sigma^n}(\alpha))_{n\geq 0}$ is ultimately periodic, with pre-period at most $m_0t$ and period at most $(n_0-m_0)t$. Since $(v_U(\alpha^{p^n}))_{n\geq 0}$ is periodic with period at most $t$, it follows that $(P_{\varphi(\sigma^n(U))}(\alpha))_{n\geq 0}$ is ultimately periodic (with pre-period at most $m_0t$ and period at most $(n_0-m_0)t$). 
This ends the proof.
\end{proof}
\end{section}

\begin{rem}
All the properties (we have proved here) of these matrix associated with morphisms are still true when replacing $p$-morphisms by  $p^s$-morphisms, for any $s\in \N^*$.
\end{rem}

\begin{section}{Examples}\label{3.examples}
 In Theorem \ref{MUIN11} we give a general upper bound for the irrationality exponent of 
 algebraic Laurent series with coefficients in a finite field. In many cases, 
 the sequence of rational approximations $(P_n/Q_n)_{n \geq 0}$ we construct turns out to 
 satisfy the conditions (i) and (ii) of Lemma \ref {3.(P_n, Q_n)}. This naturally gives rise to a much better estimate, as hinted in Remark \ref{3.sameprefix}. In this section, we illustrate this claim with few examples 
 of algebraic Laurent series for which the irrationality exponent is exactly computed or at least well estimated.


\begin{ex}

Let us consider the following equation over $\F_2(T)$
\begin{equation}\label{Exeq1}
 X^4+X+\frac{T}{T^4+1}=0.
\end{equation}
This equation is related to the Mahler algebraic Laurent series, previously mentioned.
Let $E_1=\{ \alpha \in \F_2((T^{-1})), \; \; \vert \alpha \vert <1 \}$.
We first notice that Eq. (\ref{Exeq1}) has a unique solution $f$ in $E_1$. This can be obtained by showing that the application 
$$
\begin{array}{lll}
h: &E_1 & \longmapsto E_1 \\ 
 &X & \displaystyle\longmapsto X+\frac{T}{T^4+1}
\end{array}$$
is well defined and is a contracting map from $E_1$ to $E_1$.
Then the fixed point theorem in a complete metric space implies that the equation $h(X)=X$, which is equivalent to Eq. (\ref{Exeq1}), has a unique solution in $E_1$. Let $f(T):=\sum_{i\geq 0}{a_i T^{-i}}$  denote this solution  (with $a_i=0$, since $f$ belongs to $E_1$).

The second step is to find the morphisms that generate the sequence of coefficients of $f$, as in Cobham's theorem. Notice that there is a general method that allows one to obtain these morphisms when we know the algebraic equation. In this example, we try to describe the important steps of this method and we will give further details later.

By replacing $f$ in Eq. (\ref{Exeq1}) and using the fact that $f^4(T)=\sum_{i\geq 0}{a_i T^{-4i}}$ we easily obtain the following relations between the coefficients of $f$
\begin{align}
& a_{i+1}+a_i+a_{4i+4}+a_{4i}=0, \label{ex1al1}\\
&a_1=0, a_2=0, a_3=1, \label{ex1al2} \\
& a_{i+4}+a_i=0, \text{ if } i \not \equiv 0 [4]. \label{ex1al3}
\end{align}
By Eq. (\ref{ex1al2}) and (\ref{ex1al3}), we get that $a_{4i+1}=0$, $a_{4i+2}=0$ and $a_{4i+3}=1$, for any $i\geq 0$.
From Eq. (\ref{ex1al1}) we deduce that 
\begin{align*}
& a_{16i+4}=a_{16i+8}=a_{16 i}+a_{4i}, \\
 & a_{16i+12}=a_{16i+8}+1=a_{16 i}+a_{4i}+1.\\
\end{align*}
This implies that the $4$-kernel of $\a:=(a_i)_{i\geq 0}$ is the following set
$$K_4(\a)=\{(a_i)_{i\geq 0}, (a_{4i})_{i\geq 0}, (a_{16i})_{i\geq 0}, (0), (1)\}.$$
Consequently, the $4$-automaton generating $\a$ is 

\begin{figure}[h]
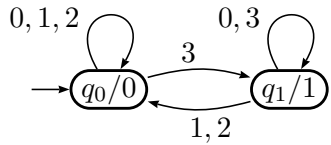

\centering

\VCDraw{%
  \begin{VCPicture}{(0, -1.5)(4, 2)}
\StateVar[q_0/0]{(0, 0)}{a} \StateVar[q_1/1]{(4, 0)}{b}
\Initial[w]{a}
\LoopN{a}{0, 1, 2}
\LoopN{b}{0, 3}
\ArcL{a}{b}{3}
\ArcL{b}{a}{1, 2}
\end{VCPicture}
}
\caption{A $4$-automaton recognizing $\a$}
\label{ex1fig}
\end{figure}

Once we have the automaton, there is a general approach to obtain the morphisms that generate an automatic sequence. More precisely, the proof of Cobham's theorem precisely describes this process. The reader may consult the original article of Cobham \cite{Cobham} or the monograph \cite{Allouche_Shallit} (Theorem 6.3.2, page 175). Following this approach, we obtain that $\a=\sigma^{\infty}(0)$, where $\sigma$ is defined as follows
\begin{align*}
 \sigma(0)&=0001\\
 \sigma(1)&=1001.
\end{align*}

It is now possible to apply our approach described in the first part of the paper. We will prove the following result.
\begin{prop}
One has 
\begin{equation}\nonumber
 \mu(f)=3.
\end{equation}
\end{prop}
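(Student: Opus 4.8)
The plan is to apply Remark~\ref{3.sameprefix} together with Lemma~\ref{3.(P_n, Q_n)}, using the machinery of Section~\ref{Matrix associated with morphisms}, to show that the repetition we can read off the morphism $\sigma$ yields the value $\mu(f)=3$ exactly. First I would locate a repetitive prefix of $\sigma^{\infty}(0)$ of the form $UV^{\omega}$. Since $\sigma(0)=0001$, we have $\sigma^{\infty}(0)=0001\,1001\,0001\,0001\cdots$, so the prefix begins $000110010001\cdots$. The natural choice is to take $U=\varepsilon$ (so $k=0$) and $V$ a suitable initial block; one checks that with $V=0001$ (so $\ell=4$) and a small fractional power, or more efficiently with a shorter repeating block, one obtains a value $\omega$ for which $UV^{\omega}$ is a prefix. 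The point is to arrange $\omega$ and $\ell$ so that the target bound $\omega=1+1/\ell$ (or the general formula $(k+\omega\ell)/(k+\ell)$) evaluates to $3$. With $k=0$, the lower bound from Theorem~\ref{muin112} is exactly $\omega$, and Remark~\ref{3.sameprefix} tells us that when $U=\varepsilon$ we get $\omega\leq\mu(f)\leq p\omega/(\omega-1)$, which for $p=2$ and the chosen $\omega$ must be pinned down to $3$; in particular I would verify that $\omega-1\geq\sqrt{p}=\sqrt 2$, so that the coprimality case of Remark~\ref{3.sameprefix} gives $\mu(f)=\omega=3$.

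The second and decisive ingredient is to verify the coprimality $(P_n,Q_n)=1$ for all $n$ large enough, since this is precisely what upgrades the inequality to an equality. By Lemma~\ref{3.(P_n, Q_n)}, and because here $k=0$, this reduces to checking condition (ii): that $P_{V_n}(a)\neq 0$ for every $a\in\overline{\F}_2$ with $a^{\ell}=1$. I would compute the matrix $M_{\sigma}(T)$ associated with the $4$-morphism $\sigma$ (so a $2\times 2$ matrix over $\F_2[T]$, reading off the positions of $0$ and $1$ in $\sigma(0)=0001$ and $\sigma(1)=1001$). Then, by Proposition~\ref{3.ultperiod}, for each relevant root of unity $\alpha$ the sequence $(P_{V_n}(\alpha))_{n\geq 0}$ is ultimately periodic, and by Corollary~\ref{3.R_n} its values are governed by products of the matrices $M_{\sigma}(\alpha^{p^{n-j}})$. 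The key saving is that $\overline{\F}_2$ has no nontrivial fourth roots of unity other than $1$ (since $T^4-1=(T-1)^4$ over $\F_2$): the only $a$ with $a^{\ell}=1$ is $a=1$. Hence I only need to track the single sequence $(P_{V_n}(1))_{n\geq 0}$, which by Corollary~\ref{3.a_F_p} is controlled by the powers of the reduced incidence matrix $M_{\sigma}(1)=M_{\sigma}^n(1)$.

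Concretely, I would evaluate $M_{\sigma}(1)$ — the mod-$2$ reduction of the incidence matrix of $\sigma$ — apply Corollary~\ref{3.R_n} to get $R_n(1)$, and then form $P_{V_n}(1)=v_V(1)\,R_n(1)$ after composing with the coding (here the coding is the identity on $\{0,1\}\hookrightarrow\F_2$). Because the relevant sequence is ultimately periodic with a short explicitly computable period, I only need to check finitely many values of $n$ to conclude that $P_{V_n}(1)\neq 0$ for all $n$ large enough, thereby establishing condition (ii) of Lemma~\ref{3.(P_n, Q_n)}. Combined with the observation above that $U=\varepsilon$ makes condition (i) vacuous (or trivially satisfied), this yields $(P_n,Q_n)=1$ for all large $n$.

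The main obstacle I anticipate is twofold. First, choosing $U$ and $V$ so that the resulting $\omega$ lands exactly on the value giving $\mu(f)=3$ requires a careful reading of the actual prefix of $\sigma^{\infty}(0)$; it may be necessary to iterate $\sigma$ once or twice, or to replace $(U,V)$ by $(\varphi(\sigma^{n_0}(U)),\varphi(\sigma^{n_0}(V)))$ for a fixed shift $n_0$, to exhibit the cleanest repetition. Second, even though the root-of-unity check collapses to the single point $a=1$ thanks to characteristic $2$, the actual computation of the ultimately periodic sequence $(P_{V_n}(1))_{n\geq 0}$ — i.e.\ determining the pre-period and period of the matrix powers $M_{\sigma}(1)^n$ and confirming the nonvanishing — is the genuinely computational heart of the argument, and I would carry it out explicitly to certify that no large $n$ produces a common factor. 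Once both points are settled, Remark~\ref{3.sameprefix} delivers $\mu(f)=\omega=3$.
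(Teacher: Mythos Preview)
Your overall strategy --- take $U=\varepsilon$, locate a repetitive prefix $V^\omega$, apply Remark~\ref{3.sameprefix}, and verify coprimality via Lemma~\ref{3.(P_n, Q_n)} using the matrix $M_\sigma$ --- is exactly the paper's approach. However, your execution contains concrete errors that would derail the argument.

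First, your expansion of the fixed point is wrong: since $\sigma(0)=0001$, one has $\sigma^2(0)=\sigma(0)\sigma(0)\sigma(0)\sigma(1)=0001\,0001\,0001\,1001$, not $0001\,1001\,0001\,0001$ as you wrote. With the correct expansion the natural (and the paper's) choice is simply $V=0$, $\ell=1$: then $\a$ begins with $\sigma^n(0)\sigma^n(0)\sigma^n(0)\sigma^n(1)$, so $\a$ begins with $V_n^3$ where $V_n=\sigma^n(0)$, and the $(3\cdot 4^n+1)$-th letter of $\a$ is the first letter of $\sigma^n(1)$, namely $1$, while that of $V_n^\infty$ is the first letter of $\sigma^n(0)$, namely $0$. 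This gives $\omega=3$ exactly and, crucially, establishes the ``next digit differs'' condition required to invoke the equality case of Remark~\ref{3.sameprefix} --- a step you never actually verify.

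Second, your threshold is misidentified: $\sigma$ is a $4$-morphism, so $|Q_{n+1}|=|Q_n|^4$ and $\theta=4$ in Lemma~\ref{approximation_lemma}. The condition for $\mu(f)=\omega$ is therefore $\omega-1\geq\sqrt{4}=2$, not $\sqrt{2}$. With $\omega=3$ this is (just) met, but your weaker condition would be incorrect in general. Finally, the coprimality check you outline is right in spirit and should be carried out: with $\ell=1$ the only root of $T^\ell-1$ over $\overline{\F}_2$ is $1$, and the paper computes $M_\sigma(1)=\left(\begin{smallmatrix}1&1\\0&0\end{smallmatrix}\right)$, which is idempotent, so $M_{\sigma^n}(1)=M_\sigma(1)$ for all $n\geq 1$ and $P_{\sigma^n(0)}(1)=(1\ 0)\,M_\sigma(1)\left(\begin{smallmatrix}0\\1\end{smallmatrix}\right)=1\neq 0$.
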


 Notice that, Mahler's theorem implies only that $\mu(f) \leq 4$ and Osgood's Theorem or Lasjaunias and de Mathan's Theorem cannot be applied since this Laurent series is clearly hyperquadratic.

\begin{proof}
We are going to introduce an infinite sequence of rational fractions $(P_n/Q_n)_{n\geq 1}$ converging to $f$. Since $\a$ begins with $0001$, we denote $V=0$ and for any $n\geq 1$, $V_n=\sigma^n(V)$. Hence $\a$ begins with $V_nV_nV_n$ for any nonnegative integer $n$. 

Since $\vert V_n \vert=4^n$, we set $Q_n(T)=T^{4^n}-1$. In Section \ref{3.construction}, we showed that there exists a polynomial $P_n(T) \in \F_2[T]$ such that:
$$\frac{P_n(T)}{Q_n(T)}=f_{V_n^{\infty}}(T).$$
The Laurent series expansion of $P_n/Q_n$ begins with
$$\sigma^n(0001)\sigma^n(0001)\sigma^n(0001)\sigma^n(0), $$
and we deduce that it begins with
$$\sigma^n(0001)\sigma^n(0001)\sigma^n(0001)0$$
while the sequence $\a$ begins with
$$\sigma^n(0001)\sigma^n(0001)\sigma^n(0001) 1.$$
Hence, the first $3\cdot 4^n$th digits of the Laurent series expansion of $P_n/Q_n$ and $f$ are the same, while the following coefficients are different.
According to Remark \ref{3.sameprefix}, we deduce that:
$$ 3 \leq \mu(f) \leq 6, $$
or, if for any $n\geq 1$ we have $(P_n, Q_n)=1$ then $$\mu(f)=3.$$

It thus remains to prove that $(P_n, Q_n)=1$ for every positive integer $n$. 
Let $n\geq 1$.  By Lemma \ref{3.(P_n, Q_n)} we have to prove that $P_n(1)\neq 0$, \textit{i.e.}, $P_{\sigma^n(0)}(1)\neq 0$. Remark \ref{3.matrix-morphism} implies that

\begin{displaymath}
\left( \begin{array}{c}
P_{\sigma^n(0)}(T) \\
P_{\sigma^n(1)}(T) 

\end{array} \right)=
M_{\sigma^n}(T) \left( \begin{array}{c}
0 \\
1 
\end{array} \right).
\end{displaymath}
Hence, $P_{ {\sigma^n(0)}}(1)\neq 0$ if and only if
\begin{displaymath}\left(\begin{array}{cc}
1 &
0
\end{array} \right)M_{\sigma^n}(1) \left(\begin{array}{c}
0 \\
1
\end{array} \right)\neq 0, 
\end{displaymath}
that is, if and only if
\begin{displaymath}\left(\begin{array}{cc}
1 &
0
\end{array} \right)M_{\sigma}^n(1) \left(\begin{array}{c}
0 \\
1
\end{array} \right)\neq 0.
\end{displaymath}
Indeed, by Corollary \ref{3.a_F_p}, we have $M_{\sigma^n}(1)=M_{\sigma}^n(1)$.
Since 
\begin{displaymath}
 M_{\sigma}(1)= 
\left( \begin{array}{cc}
1 & 1 \\
0 & 0 
\end{array} \right), 
\end{displaymath}
we deduce that $M_{\sigma^n}(1)=M_{\sigma}(1)$ and then, 
\begin{displaymath}\left(\begin{array}{cc}
1 &
0
\end{array} \right)\left( \begin{array}{cc}
1 & 1 \\
0 & 0 
\end{array} \right) \left(\begin{array}{c}
0 \\
1
\end{array} \right)\neq 0.
\end{displaymath}
Consequently, we obtain that $(P_n, Q_n)=1$. This ends the proof. 
\end{proof}

\end{ex}

\begin{rem}
Let $f(T)=\sum_{i\geq 0}{a_iT^{-i}}$ be a Laurent series with coefficients in a finite field $\F_q$, where $q$ is a power of $p$, and suppose that there is $P(T)\in \F_q[T]$ such that $P(f)=0$.
As mentioned before, there is a general approach that allows one to find the automaton that generates the infinite sequence $\a:=(a_i)_{i\geq 0}$. This consists of the following steps. First, there exists a polynomial $Q$ with coefficients in $\F_q$, of the form $Q(X)=\sum_{i\geq 0}{B_i(T)X^{p^i}}$, $B_0(T)\neq 0$ such that $Q(f)=0$. This is known as an Ore's polynomial and the existence is due to Ore's theorem (for a proof see, for example, \cite{Allouche_Shallit}, Lemma 12.2.3, page 355). Hence, the first step is to find such a Ore's polynomial vanishing $f$ (this is possible by passing $P$ to the power of $p$ as many times as we need). The second step is to find some recurrent relations between the terms $a_i$ in order to find the kernel of $\a$. This is possible thanks to the Frobenius morphism. The third step is the construction of the automaton generating $\a$. Notice that a sequence is $k$-automatic if and only if its $k$-kernel is finite. The proof of this well-known result is explicit and we refer the reader to \cite{Allouche_Shallit} (Theorem 6.6.2, page 18). The last step is to find the morphisms generating $\a$, as described in Cobham's theorem. In order to do this, the reader may refer to the proof of Cobham's theorem, which is explicit as well.

\end{rem}

The following examples present different computations of irrationality exponents of Laurent power series over finite fields. We do not give the algebraic equations of them because the computation is quite long, but, as in the previous example (where we find the morphisms if we know the equation), there is a general approach that allows one to compute the equation of a Laurent series when we know the automaton generating the sequence of its coefficients. Indeed, by knowing the morphisms we can find the automaton (see the proof of Cobham's theorem), then knowing the automaton allows to find the kernel (see \cite{Allouche_Shallit}, Theorem 6.6.2, page 185) and also the relations between the coefficients. These relations allow one to find a polynomial that vanishes the Laurent series (the reader may consult the proof of Christol's theorem in \cite{christol_2} or \cite{Allouche_Shallit}-page 356, but also \cite{kedlaya}-where a generalisation of Christol's theorem is given). More precisely the polynomial that we compute from these relations is also an Ore's polynomial. Finally, we have to factor this polynomial and to check which is the irreducible factor vanishing our algebraic Laurent series.

\begin{ex}

We now consider the Laurent series $$f_{\a}(T)=\sum_{i\geq 0}{a_iT^{-i}} \in \F_2[[T^{-1}]],$$ where the sequence $\a:=(a_i)_{i\geq 0}$ is the image under the coding $\varphi$ of the fixed point of the $8$-uniform morphism $\sigma$,  $\varphi$ and $\sigma$ being defined as follows
\begin{displaymath} \begin{array}{ccc}
 \varphi(0)&=1\\
 \varphi(1)&=0\\
 \varphi(2)&=1 \end{array}
\quad \text{ and } \quad
\begin{array}{ccc}
 \sigma(0)&=00000122\\
 \sigma(1)&=10120011\\
 \sigma(2)&=12120021.
\end{array}
\end{displaymath}
Thus $\a=11111011111\cdots $.

\begin{prop}
One has 
$ \mu(f_{\a})=5$. \end{prop}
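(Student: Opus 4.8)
The plan is to follow the template established in the previous example (the equation $X^4+X+T/(T^4+1)=0$) and to use the machinery of Sections \ref{3.construction}--\ref{Matrix associated with morphisms}. The strategy has three parts: first, identify a repetitive prefix pattern $UV^{\omega}$ in the fixed point $\sigma^{\infty}(0)$ that will yield an explicit sequence of rational approximations via Lemma \ref{f_UV}; second, verify that the prefix agreement and the first disagreement occur exactly where Remark \ref{3.sameprefix} requires, so that the irrationality exponent is pinned down up to the coprimality assumption; and third, establish the coprimality $(P_n,Q_n)=1$ for all large $n$ using Lemma \ref{3.(P_n, Q_n)} together with the matrix calculus of Proposition \ref{3.ultperiod} and Corollary \ref{3.a_F_p}.

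First I would look at the fixed point of $\sigma$ on the internal alphabet $\A_3$, compute enough of $\sigma^{\infty}(0)$ to locate a word $U$ and a nonempty word $V$ such that $UV^{\omega}$ is a prefix, where $\omega=1+1/\ell$ with $\ell=|V|$, chosen so that the target $\mu(f_{\a})=5$ is consistent with the bounds of Remark \ref{3.sameprefix}. Since the announced value is $5$ and the morphism is $8$-uniform (so $p=2$ and the relevant uniform length is $8=2^3$), I expect the pattern to have $k=|U|$ and $\ell=|V|$ satisfying $(k+\omega\ell)/(k+\ell)=5$, i.e. a repetition ratio that forces the lower bound $\mu(f_{\a})\geq 5$; concretely $\omega=5$ with $k=0$ would give $\omega/(\omega-1)\cdot 2 = 5/2 < 5$, so more likely $k=0$ is \emph{not} the case and instead one finds $U,V$ with a large overlap. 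Given $\a=11111011111\cdots$, the visible block of five $1$'s strongly suggests taking $V$ to be a single-letter-generating block whose image under $\varphi\circ\sigma^n$ produces long runs; I would set $V$ so that $V_n=\varphi(\sigma^n(V))$ and check via Lemma \ref{3.value} that $\a$ and $\mathbf c_n:=U_nV_n^{\infty}$ agree on exactly the first $(k+\omega\ell)2^n$ digits. This agreement/first-disagreement computation, carried out by comparing $\varphi(\sigma^{n+1})$-images letter by letter and tracking which letter of $\sigma(b)$ breaks the periodicity, is the combinatorial core of the argument.

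Once the pattern is fixed, Remark \ref{3.sameprefix} gives $\mu(f_{\a})=\omega(k,\ell)$ provided $(P_n,Q_n)=1$ for $n$ large and the relevant size condition ($\theta\le\delta^2$, i.e. $2\le\delta^2$ hence $\delta\ge\sqrt2$) holds; with the target exponent $5$ this condition should be comfortably met since $\delta=\mu-1=4$. To prove coprimality I would invoke Lemma \ref{3.(P_n, Q_n)}: it suffices to check $P_{U_n}(0)\neq P_{V_n}(0)$ and $P_{V_n}(a)\neq 0$ for every $a\in\overline{\F}_2$ with $a^{\ell}=1$. By Proposition \ref{3.ultperiod} each sequence $(P_{\varphi(\sigma^n(U))}(\alpha))_{n\ge0}$ and $(P_{\varphi(\sigma^n(V))}(\alpha))_{n\ge0}$ is ultimately periodic in $n$ for any fixed $\alpha\in\F_{2^t}$, so I only need to evaluate $R_n(\alpha)$ via the matrix products $M_{\sigma^n}(\alpha)=M_{\sigma}(\alpha^{2^{n-1}})\cdots M_{\sigma}(\alpha)$ of Corollary \ref{3.Msigman} for the finitely many $n$ in one pre-period-plus-period window. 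For $\alpha$ ranging over the $\ell$-th roots of unity (and $\alpha=0$), this reduces to iterating a fixed finite set of $3\times3$ matrices over a finite extension of $\F_2$ and reading off a single coordinate, exactly as in the worked example where $M_{\sigma}(1)=\bigl(\begin{smallmatrix}1&1\\0&0\end{smallmatrix}\bigr)$ forced $M_{\sigma^n}(1)=M_{\sigma}(1)$.

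The main obstacle I anticipate is twofold. The first difficulty is purely combinatorial: correctly identifying $U$, $V$, and $\omega$ and then rigorously proving the \emph{exact} first-disagreement location $(k+\omega\ell)2^n+1$, rather than just an agreement up to $(k+\omega\ell)2^n$. This requires understanding the substructure of $\sigma$ well enough to know which symbol appears immediately after the periodic block in $\sigma^{n+1}(0)$ versus in $V_n^{\infty}$; a naive argument only gives the inequality of Lemma \ref{3.value}, whereas pinning $\mu=5$ exactly needs the equality case. The second difficulty is the coprimality verification at the $\ell$-th roots of unity living in a possibly nontrivial extension $\F_{2^t}$: one must determine the correct $t$ (the multiplicative order of $2$ modulo $\ell$), compute the relevant matrix powers there, and confirm that $P_{V_n}(a)$ never vanishes for all $n$ in the periodic window. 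I expect the run structure $\a=11111011111\cdots$ to make the $\alpha=1$ (i.e. $a=1$) check the decisive one, mirroring the previous example, but the higher roots of unity will demand the full strength of Proposition \ref{M_kr} and Corollary \ref{3.a_F_p} to collapse the infinitely many conditions down to a finite, checkable list.
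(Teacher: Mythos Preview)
Your overall strategy is the paper's: find a repetitive prefix, invoke Remark \ref{3.sameprefix}, and then prove coprimality via Lemma \ref{3.(P_n, Q_n)} and the matrix calculus. The gap is a parameter error that leads you to reject the correct pattern.

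The morphism $\sigma$ is $8$-uniform, so $|V_n|=8^n|V|$ and $|Q_{n+1}|\approx|Q_n|^{8}$; the parameter $\theta$ in Lemma \ref{approximation_lemma} is therefore $8$, not the characteristic $p=2$. (This is precisely the point of the remark closing Section \ref{Matrix associated with morphisms}: everything works for $p^s$-morphisms, here $8=2^3$.) With $\theta=8$, the choice $U=\varepsilon$, $V=0$, $\omega=5$ is consistent: the upper bound in Remark \ref{3.sameprefix} is $8\cdot 5/4=10$, and under coprimality the condition $\theta\le\delta^2$ reads $8\le 16$, giving $\mu=5$ exactly. Your computation ``$\omega/(\omega-1)\cdot 2=5/2<5$'' substitutes $2$ where $8$ belongs, which is why you wrongly discarded $k=0$ and started anticipating a more complicated $U$.

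The actual pattern is immediate from $\sigma(0)=00000122$: the internal fixed point $\sigma^{\infty}(0)$ begins with five $0$'s followed by $1$, so with $V=0$ the word $V_n^5=\sigma^n(0)^5$ is a prefix of $\sigma^{\infty}(0)$, and since $\sigma^n(1)$ begins with $1$ while $\sigma^n(0)$ begins with $0$, the first disagreement between $\sigma^{\infty}(0)$ and $V_n^{\infty}$ occurs exactly at position $5\cdot 8^n+1$ (and $\varphi(0)\ne\varphi(1)$, so this survives the coding). Because $\ell=1$, the only root of unity to test in Lemma \ref{3.(P_n, Q_n)} is $a=1$: no extension fields, no condition at $0$. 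The paper simply computes the $3\times 3$ matrix $M_{\sigma}(1)$ over $\F_2$, observes that $(M_{\sigma}^n(1))_{n\ge 1}$ is $2$-periodic, and checks that the first entry of $M_{\sigma}^n(1)\,(\varphi(0),\varphi(1),\varphi(2))^{\!T}=M_{\sigma}^n(1)\,(1,0,1)^{\!T}$ equals $1$ for both parities of $n$.
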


\begin{proof}
We are going to introduce an infinite sequence of rational fractions $(P_n/Q_n)_{n\geq 0}$ converging to $f_{\a}$. Since $\a$ begins with $00000$, we denote $V=0$ and, for any $n\geq 1$, $V_n=\sigma^n(0)$. Hence, $\a$ begins with $V_n^5$ for any $n\geq 1$. 
Now, we set $Q_n(T)=T^{8^n}-1$. In Section \ref{3.construction} we showed that there exists a polynomial $P_n(T) \in \F_2[T]$ such that:
$$\frac{P_n(T)}{Q_n(T)}=f_{V_n^{\infty}}(T).$$
 The Laurent series expansion of $P_n/Q_n$ begins with
$$\sigma^n(0) \sigma^n(0) \sigma^n(0) \sigma^n(0) \sigma^n(0) \sigma^n(0)$$
and we deduce that it begins with
$$\sigma^n(0) \sigma^n(0) \sigma^n(0) \sigma^n(0) \sigma^n(0) 0, $$
while the sequence $\a$ begins with
$$\sigma^n(0) \sigma^n(0) \sigma^n(0) \sigma^n(0) \sigma^n(0) 1.$$
Hence, the first $5 \cdot 8^n$th digits of the Laurent series expansion of $P_n/Q_n$ and $f_{\a}$ are the same, while the following coefficients are different.

Using Remark \ref{3.sameprefix}, we deduce that
$$ 5 \leq \mu(f_{\a}) \leq 10.$$
Moreover, if for any $n\geq 1$ we have $(P_n, Q_n)=1$, then we obtain $$ \mu(f_{\a})=5.$$

 We now prove that $(P_n, Q_n)=1$ for every $n\geq 1$. By Lemma \ref{3.(P_n, Q_n)} we have to prove that $P_n(1)\neq 0$, \textit{i.e.}, $P_{\varphi(\sigma^n(0))}(1)\neq 0$. By Remark \ref{3.matrix-morphism} we have that

\begin{displaymath}
\left( \begin{array}{c}
P_{\varphi(\sigma^n(0))}(T) \\
P_{\varphi(\sigma^n(1))}(T) \\
P_{\varphi(\sigma^n(2))}(T)
\end{array} \right)=
M_{\sigma^n}(T) \left( \begin{array}{c}
\varphi(0) \\
\varphi(1) \\
\varphi(2)
\end{array} \right)
\end{displaymath}
and by Corollary \ref{3.a_F_p} 
$$M_{\sigma^n}(1)=M_{\sigma}^n(1).$$
The matrix associated with $\sigma$, when $T=1$, is equal to the incidence matrix of $\sigma$: 
\begin{displaymath}
 M_{\sigma}(1)= 
\left( \begin{array}{ccc}
1 & 1 & 0 \\
1 & 0 & 1\\
0 & 1 & 1
\end{array} \right).
\end{displaymath}
Hence, by an easy computation, we obtain that, for any $j\geq 1$, 
\begin{displaymath}
 M_{\sigma}^{2j+1}(1)=\left( \begin{array}{ccc}
1 & 1 & 0 \\
1 & 0 & 1\\
0 & 1 & 1
\end{array} \right) \text{ and }
M_{\sigma}^{2j}(1)=\left( \begin{array}{ccc}
0 & 1 & 1 \\
1 & 0 & 1\\
1 & 1 & 0
\end{array} \right)
\end{displaymath}
and thus, for every $n\geq 1$, we have 
\begin{displaymath}\left(\begin{array}{ccccc}
1 &
0 &
0 
\end{array} \right)M_{\sigma}^n(1) \left(\begin{array}{c}
\varphi(0) \\
\varphi(1)\\
\varphi(2)
\end{array} \right)=1 \neq 0.
\end{displaymath}
Consequently, for every $n\geq 1$, $P_{\varphi(\sigma^n(0))}(1)\neq 0$ and thus $(P_n, Q_n)=1$.
\end{proof}

\end{ex}

\begin{ex}

We now consider the Laurent series $$f_{\a}(T)=\sum_{i\geq 0}{a_iT^{-i}} \in \F_3[[T^{-1}]],$$ where the sequence $\a:=(a_i)_{i\geq 0} $ is the fixed point beginning with zero of the following $3$-uniform morphism:
 \begin{align*}
 \sigma(0)&=010\\
 \sigma(1)&=102\\
 \sigma(2)&=122.
\end{align*}
Thus $\a=010102010\cdots$.

\begin{prop}
The irrationality exponent of the Laurent series $f_{\a}$ satisfies 
\begin{equation}\nonumber
 2.66\leq \mu(f_{\a}) \leq 2.81.
\end{equation}
\end{prop}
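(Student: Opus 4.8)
The plan is to apply the machinery developed in Section \ref{3.construction} together with the explicit matrix calculus of Section \ref{Matrix associated with morphisms}, following the template already carried out in the two previous examples. First I would read off from the expansion $\a=010102010\cdots$ the repetitive prefix: since $\sigma(0)=010$ begins with $0$, the word $\a$ begins with the cube $000$ after applying a suitable power, but more precisely I would look for a prefix of the form $UV^{\omega}$ with $\omega>1$. Here one checks that $\a$ begins with $010\,102\,010$, so taking $U=\varepsilon$ and $V=\sigma(0)=010$ of length $\ell=3$ does not immediately give a repetition; instead the relevant observation is that $\a=\sigma^{\infty}(0)$ and the prefix $\sigma^n(0)\sigma^n(0)$ may not occur, so I would instead locate the genuine overlap by examining which prefix of length $e+1$ (with $e$ the number of states of the minimal automaton) forces a repeated letter via the pigeonhole argument of Section \ref{3.construction}. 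This yields words $U,V$ over the internal alphabet $\A_3$ and the exponent $\omega=1+1/\ell$, hence concrete values of $k=|U|$ and $\ell=|V|$.

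Once $k$, $\ell$ and $\omega$ are fixed, the lower bound $2.66\le\mu(f_{\a})$ comes directly from the left-hand inequality $\frac{k+\omega\ell}{k+\ell}\le\mu(f_{\a})$ of Theorem \ref{muin112}, provided the values of $k,\ell$ make this ratio at least $2.66$. To obtain the matching upper bound I would invoke Remark \ref{3.sameprefix}: if $\a$ and $\textbf{c}_n=U_nV_n^{\infty}$ share exactly the first $(k+\omega\ell)p^n$ digits and differ at the next, then $\rho=\delta=\frac{(\omega-1)\ell}{k+\ell}$, and the bound becomes $\mu(f_{\a})\le\frac{p(k+\omega\ell)}{(\omega-1)\ell}$ with $p=3$. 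I would verify the "same prefix, next digit differs" condition by a direct combinatorial inspection of how $\sigma$ acts on the boundary letters, exactly as in the $8$-uniform example above. The numerical values $2.66$ and $2.81$ should then emerge from substituting the specific $k,\ell,\omega$ into these two rational expressions.

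The decisive step, and the one I expect to be the main obstacle, is establishing coprimality $(P_n,Q_n)=1$ for all $n$ large enough, since only this upgrades the interval to a possibly sharp estimate via the final clause of Lemma \ref{approximation_lemma}. Here I would deploy Lemma \ref{3.(P_n, Q_n)}: it reduces coprimality to checking that $P_{U_n}(0)\neq P_{V_n}(0)$ and that $P_{V_n}(a)\neq 0$ for every $a\in\overline{\F}_3$ with $a^{\ell}=1$. Using Corollary \ref{3.Msigman} and Proposition \ref{M_kr}, the quantities $P_{\varphi(\sigma^n(U))}(a)$ and $P_{\varphi(\sigma^n(V))}(a)$ are computed as $v_U(a^{p^n})M_{\sigma^n}(a)(\varphi(0),\ldots,\varphi(m-1))^{t}$, and by Proposition \ref{3.ultperiod} these sequences in $n$ are ultimately periodic. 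The genuinely delicate point is that the relevant $\ell$th roots of unity $a$ need not lie in $\F_3$ itself but in some extension $\F_{3^t}$, so I must compute the powers of the matrix $M_{\sigma^t}(a)$ over that extension and confirm that none of the finitely many periodic values of $P_{V_n}(a)$ vanishes; this finite verification, reduced from an infinite family of conditions to a single period by the ultimate periodicity, is precisely where the matrix formalism of Section \ref{Matrix associated with morphisms} earns its keep.

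Should the coprimality fail to hold for every residue $n$ in the period, the conclusion would weaken to the two-sided estimate $2.66\le\mu(f_{\a})\le 2.81$ without collapsing to a single value, which is in fact exactly the form of the stated proposition. I therefore anticipate that the coprimality condition does \emph{not} hold uniformly in this example (unlike the earlier two), so that the upper bound $2.81$ is the genuine output of Remark \ref{3.sameprefix} rather than of the sharpened Voloch-type refinement, and the proof consists in pinning down $k,\ell,\omega$ and then carrying out the finite periodicity computation to certify that the interval cannot be further contracted by coprimality.
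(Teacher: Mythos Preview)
Your sketch has the right overall architecture but contains two substantive errors, one in the setup and one in the conclusion.

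\medskip

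\textbf{Finding the repetition.} The pigeonhole argument of Section~\ref{3.construction} gives only $\omega = 1+1/\ell$, and with that exponent the lower bound $(k+\omega\ell)/(k+\ell) = 1 + 1/(k+\ell)$ is at most $2$; it can never reach $2.66$. You must instead spot a much longer explicit repetition. The paper observes that $\a$ begins with
\[
010102\,010102\,0101\,22\ldots,
\]
so with $U=\varepsilon$, $V=010102$, $k=0$, $\ell=6$ and $\omega = 16/6 = 8/3$. Since $\a=\sigma^n(\a)$, the word $\a$ begins with $\sigma^n(V)^{8/3}$ and the next digit differs, for every $n$. This gives $1+\delta = 1+\rho = 8/3 \approx 2.667$ and $\theta = p = 3$, whence the preliminary bounds $8/3 \le \mu(f_{\a}) \le 3\cdot(8/3)/(5/3) = 24/5 \approx 4.8$ from Remark~\ref{3.sameprefix}.

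\medskip

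\textbf{Why the result is still an interval.} Your anticipation that coprimality fails here is incorrect: the paper \emph{does} prove $(P_n,Q_n)=1$ for all $n\ge 1$. In characteristic~$3$ we have $T^6-1=(T-1)^3(T+1)^3$, so the $\ell$th roots of unity are just $\pm 1\in\F_3$ (no field extension is needed), and the matrix computation shows $P_{V_n}(\pm 1)\neq 0$ for all $n$. With coprimality in hand, Lemma~\ref{approximation_lemma} gives
\[
\mu(f_{\a}) \le \max\!\left(1+\rho,\,1+\tfrac{\theta}{\delta}\right) = \max\!\left(\tfrac{8}{3},\,1+\tfrac{3}{5/3}\right) = \max\!\left(\tfrac{8}{3},\,\tfrac{14}{5}\right) = 2.8.
\]
The reason this does not collapse to the exact value $\mu=\omega$ is \emph{not} a coprimality failure but the numerical accident $\omega-1 = 5/3 < \sqrt{3}$ (equivalently $\theta > \delta^2$), so the final clause of Lemma~\ref{approximation_lemma} does not apply. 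This is precisely the phenomenon that distinguishes this example from the two preceding ones.
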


In this case, we are not able to compute the exact value of the irrationality exponent 
but the lower bound we found shows that the degree of $f_{\a}$ is greater than or equal to $3$. Hence our upper 
bound obviously improves on the one that could be deduced from the Liouville--Mahler's theorem.

\begin{proof}
 We are going to introduce an infinite sequence of rational fractions $(P_n/Q_n)_{n\geq 0}$ converging to $f_{\a}$. Since $\a$ begins with $010102$, we denote $V:=010102$ and for any $n\geq1$, $V_n:=\sigma^n(V)$. Hence $\a$ begins with $$\sigma^n(010102)\sigma^n(010102) \sigma^n(0101)$$ for any $n\geq 1$.
Now, we set $Q_n(T)=T^{2\cdot 3^n}-1$. There exists a polynomial $P_n(T) \in \F_3[T]$ such that:
$$\frac{P_n(T)}{Q_n(T)}=f_{V_n^{\infty}}(T).$$
The Laurent series expansion of $P_n/Q_n$ begins with
$$\sigma^n(010102)\sigma^n(010102)\sigma^n(0101) \sigma^n(0)$$
and we deduce that it begins with
$$\sigma^n(010102)\sigma^n(010102)\sigma^n(0101) 0, $$
while the sequence $\a$ begins with
$$\sigma^n(010102)\sigma^n(010102)\sigma^n(0101) 1.$$
Hence, the first $16\cdot 3^n$th digits of the Laurent series expansion of $P_n/Q_n$ and $f_{\a}$ are the same, while the following coefficients are different.
By Remark \ref{3.sameprefix}, we deduce that
$$ 2.66 \leq \mu(f_{\a}) \leq 4.81.$$
Furthermore, if for every $n\geq 1$ we have $(P_n, Q_n)=1$, then $$2.66\leq \mu(f_{\a}) \leq 2.81.$$

Let $n\geq 1$. We now prove that $(P_n, Q_n)=1$. By Lemma \ref{3.(P_n, Q_n)}, since $$Q_n(T)=(T-1)^{3^n}(T+1)^{3^n}, $$ we have to prove that, for all $n\geq 1$, $P_n(1)\neq 0$ and $P_n(-1)\neq 0$.

By definition of $(P_n(T))_{n\geq 0}$ (see Eq. (\ref{3.P_n})) we have 
$$ P_n(1)=P_{ {V_n}}(1) \text{ and } P_n(-1)=P_{ {V_n}}(-1).$$
Since $V_n=\sigma^n(010102)=\sigma^{n+1}(01)$ then, 
$$P_{ {V_n}}(T)=P_{\sigma^{n+1}(0)}(T) T^{3^{n+1}}+P_{\sigma^{n+1}(1)}(T).$$
Hence, 
\begin{align*}P_{ {V_n}}(1)&=P_{\sigma^{n+1}(0)}(1)+P_{\sigma^{n+1}(1)}(1) \\
P_{ {V_n}}(-1)&=-P_{\sigma^{n+1}(0)}(-1)+P_{\sigma^{n+1}(1)}(-1).
\end{align*}
Remark \ref{3.matrix-morphism} implies that
\begin{displaymath}
\left( \begin{array}{c}
P_{\sigma^n(0)}(T) \\
P_{\sigma^n(1)}(T) \\
 P_{\sigma^n(2)}(T)
\end{array} \right)=
M_{\sigma^n}(T) \left( \begin{array}{c}
0 \\
1 \\
2
\end{array} \right).
\end{displaymath}
If we now replace $T=1$ (respectively $T=-1$), we obtain that $P_n(1)\neq 0$ (respectively $P_n(-1)\neq 0$) if and only if
\begin{displaymath}\left(\begin{array}{ccc}
1 &
1 &
0
\end{array} \right)M_{\sigma}^n(1) \left(\begin{array}{c}
0 \\
1\\
2
\end{array} \right)\neq 0
\end{displaymath}
(respectively, 
\begin{displaymath}\left(\begin{array}{ccc}
-1 &
1 &
0
\end{array} \right)M_{\sigma}^n(-1) \left(\begin{array}{c}
0 \\
1\\
2
\end{array} \right)\neq 0).
\end{displaymath}
The matrix associated with $\sigma$ is 
\begin{displaymath}
 M_{\sigma}(T)= 
\left( \begin{array}{ccc}
T^2+1 & T & 0 \\
T & T^2 & 1 \\
0 & T^2 & T+1 
\end{array} \right).
\end{displaymath}
Hence, 
\begin{displaymath}
 M_{\sigma}(1)= 
\left( \begin{array}{ccc}
2 & 1 & 0 \\
1 & 1 & 1 \\
0 & 1 & 2 
\end{array} \right) \text{ and }
M_{\sigma}(-1)= 
\left( \begin{array}{ccc}
2 & -1 & 0 \\
-1 & 1 & 1 \\
0 & 1 & 0 
\end{array} \right).
\end{displaymath}

Notice that $ M_{\sigma}^2(\pm 1)=M_{\sigma}^4(\pm 1)$ and, by an easy computation, one can obtain that
the sequence $(P_{ {V_n}}(1))_{n\geq 0}$ is $2$-periodic and $(P_{ {V_n}}(-1))_{n\geq 0}$ is $1$-periodic; more precisely, $(P_{ {V_n}}(1))_{n\geq 0}=(12)^{\infty}$ and $(P_{ {V_n}}(-1))_{n\geq 0}=(1)^{\infty}$. This proves 
that $P_{ {V_n}}(1)$ and $P_{ {V_n}}(-1)$ never vanish, which ends the proof.

\end{proof}
\end{ex}

\begin{ex}

We now consider the Laurent series $$f_{\a}(T)=\sum_{i\geq 0}{a_iT^{-i}} \in \F_5[[T^{-1}]],$$ where the sequence $\a:=(a_i)_{i\geq 0}$ is the fixed point beginning with zero of the following $5$-uniform morphism:
 \begin{align*}
 \sigma(0)&=00043\\
 \sigma(1)&=13042\\
 \sigma(2)&=14201\\
\sigma(3)&=32411\\
\sigma(4)&=00144.
\end{align*}
Thus $\a=0004300043\cdots$.

\begin{prop}
One has 
\begin{equation}\nonumber
 \mu(f_{\a})=3.4.
\end{equation}
\end{prop}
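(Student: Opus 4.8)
The plan is to follow the template of the preceding examples: exhibit a single periodic pattern in $\a$, read off the exponent $\omega$, invoke Remark \ref{3.sameprefix}, and then settle the coprimality of the $P_n$ and $Q_n$ by means of the matrix calculus of Section \ref{Matrix associated with morphisms}.

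First I would locate the repetition. Since $\sigma(0)=00043$, I set $V:=00043=\sigma(0)$, so that $V_n:=\sigma^n(V)=\sigma^{n+1}(0)$ has length $\ell p^n$ with $\ell=5$, $p=5$, and $U=\varepsilon$ (so $k=0$). Computing $\sigma^2(0)=00043\,00043\,00043\,00144\,32411$ shows that $\a$ begins with $(00043)^3\,00$ and that its $18$-th letter is $1$, whereas $(00043)^\infty$ carries a $0$ there; hence $\a$ and $V^\infty$ share exactly the prefix of length $17=3\cdot 5+2$, i.e. $\omega=17/5$. Applying $\sigma^n$ (which preserves prefixes) to this length-$17$ prefix of $\sigma^2(0)$ shows that $\a$ begins with $V_n^{3}$ followed by the length-$2p^n$ prefix of $V_n$, that is, $V_n^{\omega}$ with $\omega=17/5$. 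Moreover the first letters of $\sigma^n(0)$ and $\sigma^n(1)$ are $0$ and $1$ respectively, so the first discrepancy between $\a$ and $\mathbf c_n=V_n^\infty$ occurs exactly at position $17p^n=(k+\omega\ell)p^n$; thus $\a$ and $\mathbf c_n$ have the same first $(k+\omega\ell)p^n$ digits while the next ones differ, for every $n$.

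With $k=0$, $\ell=5$, $p=5$ and $\omega=17/5$, Remark \ref{3.sameprefix} (its $U=\varepsilon$ case) yields $\omega\le\mu(f_\a)\le p\omega/(\omega-1)$, that is $3.4\le\mu(f_\a)\le 85/12$. Since $\omega-1=12/5=2.4\ge\sqrt 5$, the same remark gives $\mu(f_\a)=\omega=3.4$ as soon as $(P_n,Q_n)=1$ for all $n$ large enough, so everything reduces to this coprimality statement. Here $Q_n(T)=T^{5^{n+1}}-1=(T-1)^{5^{n+1}}$ in characteristic $5$, so $1$ is its only root and, as $Q_n(0)\ne0$, Lemma \ref{3.(P_n, Q_n)} reduces coprimality to the single condition $P_n(1)\ne0$, equivalently $P_{V_n}(1)\ne0$ since $P_n(T)=TP_{V_n}(T)$. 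By Remark \ref{3.matrix-morphism} and Corollary \ref{3.a_F_p}, $P_{V_n}(1)=P_{\sigma^{n+1}(0)}(1)$ is the first coordinate of $M_\sigma(1)^{n+1}(0,1,2,3,4)^{\mathsf T}$, where $M_\sigma(1)$ is the incidence matrix of $\sigma$ reduced modulo $5$. By Proposition \ref{3.ultperiod} the sequence $(P_{V_n}(1))_{n\ge0}$ is ultimately periodic, so only finitely many terms need be inspected: a direct computation shows that the orbit of $(0,1,2,3,4)^{\mathsf T}$ under $M_\sigma(1)$ is periodic of period $4$ from the first step on, with first coordinates cycling through $2,1,3,4$, all nonzero in $\F_5$. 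Hence $P_{V_n}(1)\ne0$ for every $n$, giving $(P_n,Q_n)=1$ and $\mu(f_\a)=3.4$.

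The main obstacle is precisely this last step: the exact value $\mu=3.4$, as opposed to the weaker $3.4\le\mu\le 85/12$, hinges entirely on the coprimality, and the whole point of the matrix formalism of Section \ref{Matrix associated with morphisms} is that Proposition \ref{3.ultperiod} converts the a priori infinite family of conditions $P_{V_n}(1)\ne0$ into a finite, purely mechanical verification over $\F_5$.
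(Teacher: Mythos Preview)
Your proof is correct and follows essentially the same route as the paper: you identify the period $V=00043=\sigma(0)$ with $k=0$, $\ell=5$, $\omega=17/5$, verify that the discrepancy between $\a$ and $V_n^{\infty}$ occurs exactly at position $17\cdot 5^n$, apply Remark \ref{3.sameprefix} (noting $\omega-1\ge\sqrt{5}$), and then settle coprimality by computing the orbit of $(0,1,2,3,4)^{\mathsf T}$ under $M_\sigma(1)$ and finding the $4$-periodic sequence $2,1,3,4$ of first coordinates. This matches the paper's argument and its final numerical verification; your indexing $Q_n(T)=T^{5^{n+1}}-1$ is in fact the correct one, consistent with the period length $|V_n|=5^{n+1}$.
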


Notice that, after Mahler's theorem, the degree of algebricity of $f_{\a}$ is greater than or equal than $4$.

\begin{proof}
 We are going to introduce an infinite sequence of rational fractions $(P_n/Q_n)_{n\geq 0}$ converging to $f_{\a}$. We can remark that the sequence $\a$ begins with $$00043000430004300144,$$ and thus, more generally, $\a$ begins with $$\sigma^n(00043)\sigma^n(00043)\sigma^n(00043) \sigma^n(00)\sigma^n(1)$$ for every $n\geq 1$. 
Now, we set $Q_n(T)=T^{5^n}-1$. There exists a polynomial $P_n(T) \in \F_5[T]$ such that:
$$\frac{P_n(T)}{Q_n(T)}=f_{\sigma^n(00043)^{\infty}}(T).$$
 The Laurent series expansion of $P_n/Q_n$ begins with
\begin{align*}
 &\sigma^n(00043) \sigma^n(00043) \sigma^n(00043) \sigma^n(00043)= \\
&= \sigma^n(00043) \sigma^n(00043) \sigma^n(00043) \sigma^n(00)\sigma^n(0) \sigma^n(43) 
\end{align*}
and we deduce that it begins with
$$\sigma^n(00043) \sigma^n(00043) \sigma^n(00043) \sigma^n(00) 0, $$
while the sequence $\a$ begins with
$$\sigma^n(00043) \sigma^n(00043) \sigma^n(00043) \sigma^n(00) 1.$$
Hence, the first $17 \cdot 5^n$th digits of the Laurent series expansion of $P_n/Q_n$ and $f_{\a}$ are the same, while the following coefficients are different. 

According to Remark \ref{3.sameprefix}, we deduce that
$$ 3.4 \leq \mu(f_{\a}) \leq 7.08.$$
Moreover, if for every $n\geq 1$ we have $(P_n, Q_n)=1$, then we obtain $$ \mu(f_{\a})=3.4 .$$

Let $n\geq 1$. We are going to prove that $(P_n, Q_n)=1$, for any $n\geq 1$. By Lemma \ref{3.(P_n, Q_n)}, it is necessary and sufficient to prove that $P_n(1)\neq 0$, \textit{i.e.}, $P_{\sigma^n(0)}(1)\neq 0$, which is equivalent to:

\begin{displaymath}\left(\begin{array}{ccccc}
1 &
0 &
0 &
0 &
0 
\end{array} \right)M_{\sigma}^n(1) \left(\begin{array}{c}
0 \\
1\\
2\\
3\\
4
\end{array} \right)\neq 0.
\end{displaymath}

Since the matrix associated with $\sigma$ is 
\begin{displaymath}
 M_{\sigma}(T)= 
\left( \begin{array}{ccccc}
T^4+T^3+T^2 & 0 & 0 & 1 & T \\
T^2 & T^4 & 1 &T^3 & T \\
T & T^4+1 & T^2 & 0 & T^3 \\ 
0 & T+1 & T^3 & T^4 & T^2 \\ 
T^4+T^3 & T^42 & 0 & 0 & T+1 
\end{array} \right).
\end{displaymath}
we obtain that
\begin{displaymath}
 M_{\sigma}(1)= 
\left( \begin{array}{ccccc}
3 & 0 & 0 & 1 & 1 \\
1 & 1 & 1 & 1 & 1\\
1 & 2 & 1 & 0 & 1 \\
0 & 2 & 1 & 1 & 1 \\
2 & 1 & 0 & 0 & 2. 
\end{array} \right).
\end{displaymath}

It follows by a short computation (for instance, using Maple) that the sequence $(M_{\sigma}^n(1))_{n\geq 1}$ is $20$-periodic and $(P_{ {V_n}}(1))_{n\geq 0}$ is $4$-periodic. Moreover $(P_{ {V_n}}(1))_{n\geq 0}=(2134)^{\infty}$. Hence $P_n(1) \neq 0$, for any $n\geq 1$.

\end{proof}

\end{ex}

\end{section}


\begin{thebibliography}{99}

\bibitem{adamczewski_bugeaudR} B. Adamczewski and Y. Bugeaud, Nombres r\'eels de complexit\'e sous-lin\'eaire : mesures d'irrationalit\'e et de transcendance, \textit{J. Reine Angew. Math.}, to appear.

\bibitem{adamczewski_cassaigne} B. Adamczewski and J. Cassaigne, Diophantine properties of real numbers generated by finite automata,  \textit{Compositio Math.} \textbf{142} (2006), 1351--1372. 


\bibitem{adamczewski_rivoal} B. Adamczewski and T. Rivoal, Irrationality measures for some automatic reals numbers, \textit{Math. Proc. Cambridge Phil. Soc.} \textbf{147} (2009), 659--678. 




\bibitem{Allouche_Shallit}J.-P. Allouche and J. Shallit, \emph{Automatic Sequences: Theory, Applications, Generalizations}, Cambridge University Press, Cambridge, 2003. 

\bibitem{baumsweet} L. Baum and M. Sweet, Continued fractions of
    algebraic power series in characteristic 2, \textit{ Ann. of Math.} {\bf 103} (1976), 593-610.


\bibitem{christol_2} G. Christol,  Ensembles presque periodiques $k$-reconnaissables, \emph{Theoret. Comput. Sci.} \textbf{9} (1979), 141--145. 
 
\bibitem{Cobham} A. Cobham, Uniform tag sequences, \emph{Math. Systems Theory} \textbf{6} (1972), 164--192. 

\bibitem{Dyson} F. J. Dyson, The approximation to algebraic numbers by rationals, \textit{Acta Math.}
\textbf{79} (1947), 225--240.

\bibitem{eilenberg} S. Eilenberg,   \textit{Automata, languages and machines}, vol. A, Academic Press, New York, 1974. 


\bibitem{Gill} B. P. Gill, An analogue for Algebraic Functions of the Thue-Siegel Theorem, \textit{Ann. of Math.} \textbf{ 31} (1930), 207--218.


\bibitem{kedlaya} K. Kedlaya, The algebraic closure of the power series field in positive characteristic,
\textit{Proc. Amer. Math. Soc.} \texttt{129} (2001), 3461--3470.



\bibitem{Lasjaunias} A. Lasjaunias, A survey of Diophantine Approximation in Fields of Power Series,\textit{ Monatsh. Math.} \textbf{130} (2000), 211--229.

\bibitem{lasjaunias_demathan} A. Lasjaunias and B. de Mathan, Thue's theorem in positive characteristic,\textit{J. Reine Angew. Math} \textbf{473} (1996), 195--206.



\bibitem{mahler} K. Mahler, On a theorem of Liouville in fields of positive characteristic, \textit{Canad. J. Math} \textbf{1} (1949), 397--400.

 \bibitem{Maillet} E. Maillet, {\it Introduction \`a la th\'eorie des nombres 
transcendants et des propri\'et\'es arithm\'etiques des fonctions}, 
Gauthier-Villars, Paris, 1906.



\bibitem{osgood1} C. Osgood, An effective lower bound on the ``Diophantine approximation'' of algebraic functions by rational functions, \textit{Mathematika}, \textbf{20} (1973), 4--15.

\bibitem{osgood2} C. Osgood, Effectives  bounds on the ``Diophantine approximation'' of algebraic functions over fields of arbitrary characteristic and applications to differential equations, \textit{Nederl. Akad. Wetensch. Proc. Ser. A}, \textbf{78} (1975), 37:105--119.

\bibitem{Queff-SDS} M. Queff\'ellec, \textit{Substitution Dynamical Systems. Spectral Analysis}, Lecture Notes in Math. \textbf{1294}, Springer-Verlag, Berlin, 1987.

\bibitem{Roth55} K. F. Roth, Rational approximations to algebraic numbers, \textit{Mathematika} \textbf{2}
(1955), 1--20 ; corrigendum, 169.


%
\bibitem{Schmidt2} W. Schmidt, On continued fractions and Diophantine approximation in power series fields, \textit{Acta Arith. XCV}, \textbf{2} (2000), 139--166.


\bibitem{Siegel} C. Siegel, Approximation algebraischer Zahlen, \textit{Math. Z.} \textbf{10} (1921), 173--213.

\bibitem{Thakur2} D. Thakur, Diophantine approximation exponents and continued fractions for algebraic power series, \textit{J.Number Theory}\textbf{79} (1999), 284--291.

\bibitem{thakur} D. Thakur, \emph{Function Field Arithmetic}, World Scientfic, Singapore, 2004. 

\bibitem{Thue09} A. Thue, Uber Annaherungswerte algebraischer Zahlen, \textit{J. Reine Angew. Math.} \textbf{135} (1909), 284--305.

\bibitem{Uchiyama} S. Uchiyama, Rational approximation to algebraic function, \textit{Proc. Japan Academy} \textbf{36} (1960), 1--2.


\bibitem{Voloch88} J. F. Voloch, Diophantine approximation in positive characteristic, 
\textit{Period. Math. Hungar.} \textbf{19} (1988), 217--225. 

%

\end{thebibliography}
\end{document}